\documentclass[10pt,a4paper,twoside]{amsart}
\usepackage{graphicx}
\usepackage{a4wide}

\newtheorem{theorem}{Theorem}[section]
\newtheorem{lemma}{Lemma}[section]

\newtheorem*{maintheorem*}{Main Theorem}
\allowdisplaybreaks
\numberwithin{equation}{section}

\newcommand{\norm}[1]{\left\| #1 \right\|}

\newcommand{\Fed}{F_{\delta}}
\newcommand{\eps}{\varepsilon}

\newcommand{\ed}{{\delta}}
\newcommand{\ued}{u_\ed}
\newcommand{\Ped}{P_\ed}

\newcommand{\ue}{u_\eps}
\newcommand{\dk}{\delta_k}
\newcommand{\uedk}{u_{\dk}}

\newcommand{\Pe}{P_\eps}

\newcommand{\Pedk}{P_{\dk}}

\newcommand{\pt}{\partial_t}

\newcommand{\px}{\partial_x }
\newcommand{\pxx}{\partial_{xx}^2}
\newcommand{\pxxx}{\partial_{xxx}^3}
\newcommand{\pxxxx}{\partial_{xxxx}^4}

\newcommand{\ptx}{\partial_{tx}^2}

\renewcommand{\i}{\ifmmode\mathit{\mathchar"7010 }\else\char"10 \fi}
\renewcommand{\j}{\ifmmode\mathit{\mathchar"7011 }\else\char"11 \fi}
\newcommand{\R}{\mathbb{R}}
\newcommand{\N}{\mathbb{N}}

\newcommand{\Hneg}{H_{\mathrm{loc}}^{-1}}
\newcommand{\CL}{\mathcal{L}}

\newcommand{\oeps}{\omega_{\eps}}







{%

\begin{enumerate}}%
{\end{enumerate}}

%
{%

\begin{enumerate}}%
{\end{enumerate}}

\begin{document}\large

\title[Dissipation and short wave dispersion]{Wellposedness of the Ostrovsky--Hunter Equation\\ under the combined  effects of dissipation\\ and short wave dispersion}

\date{\today}

\author[G. M. Coclite and L. di Ruvo]{Giuseppe Maria Coclite and Lorenzo di Ruvo}
\address[Giuseppe Maria Coclite and Lorenzo di Ruvo]
{\newline Department of Mathematics,   University of Bari, via E. Orabona 4, 70125 Bari,   Italy}
\email[]{giuseppemaria.coclite@uniba.it, lorenzo.diruvo@uniba.it}
\urladdr{http://www.dm.uniba.it/Members/coclitegm/}

\thanks{The authors are members of the Gruppo Nazionale per l'Analisi Matematica, la Probabilit\`a e le loro Applicazioni (GNAMPA) of the Istituto Nazionale di Alta Matematica (INdAM)}

\keywords{Existence, Uniqueness, Stability, Ostrovsky-Hunter equation, Cauchy problem.}

\subjclass[2000]{35G25, 35K55, }


\begin{abstract}
The Ostrovsky-Hunter equation provides a model for small-amplitude long waves in a rotating fluid of finite depth. 
It is a nonlinear evolution equation. In this paper we study the well-posedness for the Cauchy problem associated to this equation in presence of  some weak dissipation effects.
\end{abstract}

\maketitle

\section{Introduction}
\label{sec:intro}
Many physical problems (such as  non-linear shallow-water waves and wave motion in
plasmas) are described by the  following nonlinear evolution equation
\begin{equation}
\label{eq:Kdv}
\pt u + \px f(u) -\alpha\pxx u - \beta \pxxx u=0, \quad \alpha, \beta \in \R, \quad  f(u)=\frac{u^2}{2},
\end{equation}
which was derived by Korteweg-deVries (see \cite{KdV}).
\eqref{eq:Kdv} is also known as the Korteweg-de Vries-Burgers equation (see \cite{CG, FM, Su}), where $\alpha\pxx u$ is a viscous
dissipation term.
If \eqref{eq:Kdv} describes the evolution of non-linear shallow-water waves, then the function $u(t,x)$ is the amplitude of an appropriate linear long wave mode, with linear long wave speed $C_0$. However, when the effects of background
rotation through the Coriolis parameter $\kappa$ need to be taken into account, an extra term is needed, and \eqref{eq:Kdv} is replaced by
\begin{equation}
\label{eq:OHbeta}
\px(\pt u+\px f(u)-\alpha\pxx u -\beta \pxxx u)=\gamma u,
\end{equation}
where $\gamma=\frac{\kappa^2}{2C_0}$ (see \cite{dR,HT}).
If $\alpha=\beta=0$, then \eqref{eq:OHbeta} reads
\begin{equation}
\label{eq:OH}
\px(\pt u+\px f(u))=\gamma u.
\end{equation}
\eqref{eq:OH} is known under different names such as the reduced Ostrovsky equation \cite{GP, P, S}, the
Ostrovsky-Hunter equation \cite{B}, the short-wave equation \cite{H}, and the Vakhnenko equation
\cite{MPV, PV}. The well-posedness of \eqref{eq:OH} in class of discontinuous solutions has been proved in \cite{Cd, Cd1}.

If $\alpha=0$, \eqref{eq:OHbeta} reads
\begin{equation}
\label{eq:OHbeta1}
\px(\pt u+\px f(u)-\beta \pxxx u)=\gamma u,
\end{equation}
which is known as the Ostrovsky equation (see \cite{O}).
Mathematical properties of  \eqref{eq:OHbeta1} were studied
recently in many details, including the local and global well-posedness in energy space
\cite{GL, LM, LV, T}, stability of solitary waves \cite{LL, L, LV:JDE}, wave breaking \cite{LPS},  and convergence of solutions in the
limit of the Korteweg-deVries equation \cite{LL:07, LV:JDE}.

Let us assume, in \eqref{eq:OHbeta}, that $\alpha=1,\,\beta=0$. Therefore, we have
\begin{equation}
\label{eq:wd1}
\px(\pt u+\px f(u)-\pxx u)=\gamma u.
\end{equation}
\eqref{eq:wd1} describes the combined effects of dissipation and short waves dispersion, and is analogous to the \eqref{eq:Kdv} for dissipative long waves. It
can be deduced considering two asymptotic expansions of the shallow water equations, first with respect to the rotation frequency and then with respect to the amplitude of the waves (see \cite{dR, HT}).

We are interested in the initial value problem for \eqref{eq:wd1}, so we augment \eqref{eq:wd1} with the initial condition
\begin{equation}
\label{eq:init}
u(0,x)=u_0(x), \qquad x\in\R,
\end{equation}
on which we assume that
\begin{equation}
\label{eq:assinit}
u_0\in L^1(\R)\cap L^{\infty}(\R),\quad\int_{\R}u_{0}(x)dx=0.
\end{equation}
On the function
\begin{equation}
\label{eq:def-di-P01}
P_{0}(x)=\int_{-\infty}^{x} u_{0}(y)dy, \quad x\in\R,
\end{equation}
we assume that
\begin{equation}
\label{eq:L-2P01}
\begin{split}
\norm{P_0}^2_{L^2(\R)}&=\int_{\R}\left(\int_{-\infty}^{x}u_{0}(y)dy\right)^2dx <\infty,\\
\int_{\R}P_0(x)dx&= \int_{\R}\left(\int_{-\infty}^{x}u_{0}(y)dy\right)dx=0.
\end{split}
\end{equation}
The flux $f$ is assumed to be smooth, genuinely nonlinear, and subquadratic, namely:
\begin{equation}
\label{eq:assflux1}
f\in C^2(\R),\qquad |\{f''=0\}|=0,\qquad \vert f'(u) \vert \le C_{0} \vert u \vert, \quad u\in\R,
\end{equation}
for some a positive constant $C_0$.

Integrating \eqref{eq:wd1} on $(-\infty,x)$ we gain the integro-differential formulation of problem \eqref{eq:wd1}, and \eqref{eq:init} (see \cite{LV})
\begin{equation}
\label{eq:OHw-u}
\begin{cases}
\pt u+\px f(u) =\gamma \int^x_{-\infty} u(t,y) dy+\pxx u,&\qquad t>0, \ x\in\R,\\
u(0,x)=u_0(x), &\qquad x\in\R,
\end{cases}
\end{equation}
that is equivalent to
\begin{equation}
\label{eq:OHw}
\begin{cases}
\pt u+\px f(u)=\gamma P+\pxx u,&\qquad t>0, \ x\in\R ,\\
\px P=u,&\qquad t>0, \ x\in\R,\\
P(t,-\infty)=0,&\qquad t>0, \\
u(0,x)=u_0(x), &\qquad x\in\R.
\end{cases}
\end{equation}
The main result of this paper is the following theorem.
\begin{theorem}\label{th:wellp}
Let $T>0$. Assume \eqref{eq:assinit}, \eqref{eq:def-di-P01}, \eqref{eq:L-2P01} and \eqref{eq:assflux1}. Then there exists
a unique classical solution for the Cauchy problem of \eqref{eq:OHw-u}, or \eqref{eq:OHw}, $u$ such that
\begin{equation}
\label{eq:uePe}
\begin{split}
u&\in L^{\infty}((0,T)\times\R)\cap C((0,T);H^\ell(\R)),\quad \forall\ell \in\N, \\
P&\in L^{\infty}((0,T)\times\R)\cap L^{2}((0,T)\times\R),\\
\int_{\R} u&(t,x) dx=0, \quad t\ge 0.
\end{split}
\end{equation}
Moreover, if $u$ and $v$ are two solutions of \eqref{eq:OHw-u}, or \eqref{eq:OHw}, the following inequality holds
\begin{equation}
\label{eq:l2-stability}
\norm{u(t,\cdot)-v(t,\cdot)}_{L^2(\R)}\le e^{C(T)t}\norm{u_{0}-v_{0}}_{L^2(\R)},
\end{equation}
for some suitable $C(T)>0$, and every $0\le t\le T$.
\end{theorem}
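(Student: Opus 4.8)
The plan is to construct the solution by a vanishing--viscosity/regularization procedure and to prove uniqueness together with the stability bound \eqref{eq:l2-stability} by a direct energy estimate on the difference of two solutions. First I would regularize \eqref{eq:OHw}: for $\eps>0$ mollify the datum and add a small higher--order dissipative term, so that classical, rapidly decaying solutions $\ue,\Pe$ exist globally by standard parabolic theory, and then derive every estimate uniformly in $\eps$. The backbone is the basic energy identity: multiplying the first equation by $u$ and integrating, the flux term $\int_\R u\px f(u)\dx$ is a perfect $x$--derivative and vanishes, while the nonlocal term cancels because $\gamma\int_\R Pu\dx=\tfrac{\gamma}{2}\int_\R\px(P^2)\dx=0$, the zero--mean normalizations \eqref{eq:assinit} and \eqref{eq:L-2P01} (which propagate in time) forcing $P$ to vanish at $\pm\infty$. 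This yields
\[
\frac{1}{2}\frac{d}{dt}\norm{u(t,\cdot)}^2_{L^2(\R)}+\norm{\px u(t,\cdot)}^2_{L^2(\R)}=0,
\]
hence $\norm{u(t,\cdot)}_{L^2(\R)}\le\norm{u_0}_{L^2(\R)}$ (with $u_0\in L^2(\R)$ by \eqref{eq:assinit}) and the dissipation bound $\int_0^T\norm{\px u}^2_{L^2(\R)}\dt<\infty$.

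Next I would control the antiderivative $P$ and then $\norm{u}_{L^\infty}$. Since $\norm{P(t,\cdot)}_{L^\infty(\R)}\le\norm{u(t,\cdot)}_{L^1(\R)}$, an $L^1$ estimate for the parabolic equation --- in which the diffusion is $L^1$--dissipative and the quadratic flux again contributes a perfect derivative --- reduces the growth of $\norm{u}_{L^1(\R)}$ to $\gamma\norm{P}_{L^1(\R)}$, finite by \eqref{eq:L-2P01}; together with an $L^2$ bound on $P$ this gives a bound on $\norm{P}_{L^\infty((0,T)\times\R)}$. With $P$ bounded, the maximum principle for $\pt u+f'(u)\px u=\gamma P+\pxx u$ gives $\tfrac{d}{dt}\max_x u\le\gamma\norm{P}_{L^\infty}$ (and symmetrically for the minimum), hence a bound on $\norm{u(t,\cdot)}_{L^\infty(\R)}$ on $[0,T]$. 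Higher regularity then follows by bootstrap: multiplying by $-\pxx u$, using $\abs{f'(u)}\le C_0\norm{u}_{L^\infty}$ from \eqref{eq:assflux1} and the cancellation $\gamma\int_\R P\pxx u\dx=0$, Young's inequality gives $\tfrac{d}{dt}\norm{\px u}^2_{L^2(\R)}\le C\norm{\px u}^2_{L^2(\R)}$ and an $H^1$ bound; differentiating the equation and iterating controls every $H^\ell$ norm, while parabolic smoothing yields the $C((0,T);H^\ell(\R))$ regularity claimed in \eqref{eq:uePe}. These uniform bounds supply the compactness (Aubin--Lions) needed to pass to the limit $\eps\to0$ and obtain a classical solution.

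For uniqueness and stability, let $u,v$ be two solutions with antiderivatives $P,Q$ and set $w=u-v$. Subtracting the equations in \eqref{eq:OHw}, multiplying by $w$ and integrating, the nonlocal term cancels as before, $\gamma\int_\R(P-Q)w\dx=\tfrac{\gamma}{2}\int_\R\px((P-Q)^2)\dx=0$, while the flux term is estimated using $\abs{f(u)-f(v)}\le C_0\max\{\norm{u}_{L^\infty},\norm{v}_{L^\infty}\}\abs{w}$ from \eqref{eq:assflux1}; absorbing it into $\norm{\px w}^2_{L^2(\R)}$ by Young's inequality gives
\[
\frac{d}{dt}\norm{w(t,\cdot)}^2_{L^2(\R)}\le 2C(T)\norm{w(t,\cdot)}^2_{L^2(\R)},
\]
and Gr\"onwall's lemma yields exactly \eqref{eq:l2-stability}; uniqueness is the special case $u_0=v_0$.

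The main obstacle is the global--in--time $L^\infty$ bound on $u$: the source $\gamma P$ does not respect the maximum principle on its own, so one must first break the circular dependence between $\norm{u}_{L^\infty}$ and the estimates on the nonlocal antiderivative $P$, closing the $L^1$ and $L^2$ bounds on $P$ by means of the zero--mass normalizations \eqref{eq:assinit}, \eqref{eq:L-2P01} and the subquadratic growth \eqref{eq:assflux1} before the maximum principle can be invoked.
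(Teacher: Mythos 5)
Your overall architecture (approximate, bound $\norm{P}_{L^\infty}$, use the maximum principle to bound $\norm{u}_{L^\infty}$, bootstrap $H^\ell$ estimates, pass to the limit by compactness, and prove \eqref{eq:l2-stability} by an $L^2$ energy estimate plus Gronwall) parallels the paper's, and your uniqueness/stability argument is essentially the paper's proof of Theorem \ref{th:wellp} (your variant, integrating the flux term by parts and absorbing it into $\norm{\px w}^2_{L^2}$ by Young, works). However, the existence half has a genuine gap exactly at the point you yourself call ``the main obstacle'': the bound on $\norm{P}_{L^\infty((0,T)\times\R)}$. Your proposed chain is $\norm{P(t,\cdot)}_{L^\infty}\le\norm{u(t,\cdot)}_{L^1}$, then $\tfrac{d}{dt}\norm{u(t,\cdot)}_{L^1}\le\gamma\norm{P(t,\cdot)}_{L^1}$, with $\norm{P(t,\cdot)}_{L^1}$ ``finite by \eqref{eq:L-2P01}.'' This does not close: \eqref{eq:L-2P01} is an $L^2$ statement about $P_0$ at time zero only; it gives no $L^1$ bound on $P(t,\cdot)$ for $t>0$ (nor even on $P_0$). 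If you try to derive one, the equation satisfied by $P$ is $\pt P=\gamma F-\big(f(u)-f(0)\big)+\px u$ with $F(t,x)=\int_{-\infty}^{x}P(t,y)\,dy$, so any $L^1$ or $L^2$ estimate for $P$ drags in the second antiderivative $F$ and $\norm{\px u}_{L^1}$, neither of which is controlled at this stage: you are back in the same circle one level up. Even the ingredient that kills the $F$-term, namely $\int_\R P(t,x)\,dx=0$ for $t>0$ (so that $\gamma\int_\R F\px F\,dx=\tfrac{\gamma}{2}\big(\int_\R P\,dx\big)^2=0$), does not simply ``propagate'' from \eqref{eq:L-2P01}; the paper must prove it from the structure of the equation (Lemma \ref{lm:p8}). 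The paper then closes the loop by a different mechanism (Lemma \ref{lm:P-infty}): an energy identity for $P_\delta$ itself, in which the flux contribution is bounded via the subquadratic assumption as $\int_\R|f(\ued)||\Ped|\,dx\le C_0\norm{\Ped}_{L^\infty}\norm{\ued}^2_{L^2}$, Gronwall yields $\norm{\Ped(t,\cdot)}^2_{L^2}\le C(T)\big(1+\norm{\Ped}_{L^\infty(I_{T,1})}\big)$, and the interpolation $\norm{\Ped}^2_{L^\infty}\le2\norm{\Ped}_{L^2}\norm{\px\Ped}_{L^2}$ then produces a quadratic inequality for $\norm{\Ped}_{L^\infty(I_{T,1})}$ whose resolution gives \eqref{eq:P-infty}. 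Some argument of this type --- bounding $P$ through its own energy estimate rather than through $\norm{u}_{L^1}$ --- is what your proof is missing and cannot be waved away.

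A secondary but real issue concerns your regularization. You keep the constraint $\px P=u$, mollify the datum, add higher-order dissipation, and invoke ``standard parabolic theory'' for global classical solvability of the regularized system. That is not standard: the nonlocal term $P=\int_{-\infty}^{x}u\,dy$ is precisely the difficulty, and extra viscosity in $u$ does nothing to tame it (note that \eqref{eq:OHw} already contains $\pxx u$). The paper instead perturbs the constraint to the elliptic equation $-\delta\pxx\Ped+\px\Ped=\ued$ because classical wellposedness of that parabolic-elliptic system is known from \cite{CHK:ParEll}; the price is that your exact cancellation $\int_\R uP\,dx=0$ becomes only the inequality \eqref{eq:uP}, whence the $e^{2\gamma t}$ factors in the paper's estimates. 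So you must either prove local wellposedness of your regularized system yourself (a fixed-point argument again requiring control of $u(t,\cdot)$ in $L^1$) or adopt the paper's approximation, in which case your exact-cancellation energy identities must be replaced by their inequality counterparts.
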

The existence argument is based on passing to limit using a compensated compactness argument \cite{TartarI} in the parabolic-elliptic approximation of \eqref{eq:OHw}:
\begin{equation*}
\pt\ued +\px f(\ued) = \gamma \Ped +\pxx \ued, \quad -\ed\pxx\Ped+\px\Ped = \ued.
\end{equation*}
The paper is organized as follows. In Section \ref{sec:vv} we prove several a priori estimates on the parabolic-elliptic. Those play a key role in the proof of our main result, that is given in Section \ref{sec:w12}.

\section{Parabolic-elliptic approximation}\label{sec:vv}
Our existence argument is based on passing to the limit in a parabolic-elliptic approximation.
Fix $0<\delta <1$, and let $\ued=\ued (t,x)$ be the unique classical solution of the following mixed problem \cite{CHK:ParEll}:
\begin{equation}
\label{eq:OHepsw1}
\begin{cases}
\pt \ued+\px f(\ued)=\gamma\Ped +\pxx\ued,&\quad t>0,\ x\in\R,\\
-\delta\pxx\Ped+\px\Ped=\ued,&\quad t>0,\ x\in\R,\\
\ued(0,x)=u_{\delta,0}(x),&\quad x\in\R,
\end{cases}
\end{equation}
where $u_{\delta,0}$ is a $C^\infty$ approximation of $u_{0}$ such that
\begin{equation}
\label{eq:u0eps}
\begin{split}
&\norm{u_{\delta,0}}_{L^2(\R)}\le \norm{u_{0}}_{L^2(\R)},\quad \norm{u_{\delta,0}}_{L^{\infty}(\R)}\le \norm{u_{0}}_{L^{\infty}(\R)},\\
&\norm{\px u_{\delta,0}}_{L^2(\R)}\le C_{0},\quad   \norm{\pxx u_{\delta,0}}_{L^2(\R)}\le C_{0}\\
&\norm{P_{\delta,0}}_{L^2(\R)}\le \norm{P_{0}}_{L^2(\R)},\quad\delta\norm{\px P_{\delta,0}}_{L^2(\R)}\le C_{0},
\end{split}
\end{equation}
and $C_0$ is a constant independent on $\delta$.

Let us prove some a priori estimates on $\ued$ and $\Ped$, denoting with $C_0$ the constants which depend on the initial data, and $C(T)$ the constants which depend also on $T$.
\begin{lemma}
\label{lm:cns1}
For each $t\in (0,\infty)$,
\begin{equation}
\label{eq:P-pxP-intfy1}
\Ped(t,\infty)=\px\Ped(t,-\infty)=\px\Ped(t,\infty)=0.
\end{equation}
Moreover,
\begin{equation}
\label{eq:equ-L2-stima1}
\delta^2\norm{\pxx\Ped(t,\cdot)}^2_{L^2(\R)}+ \norm{\px\Ped(t,\cdot)}^2_{L^2(\R)}=\norm{\ued(t,\cdot)}^2_{L^2(\R)}.
\end{equation}
\end{lemma}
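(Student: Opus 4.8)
The plan is to treat the elliptic equation $-\delta\pxx\Ped+\px\Ped=\ued$ in \eqref{eq:OHepsw1}, for each fixed $t$, as a linear second-order ordinary differential equation in $x$ with right-hand side the given datum $\ued(t,\cdot)$, to read off the three boundary identities \eqref{eq:P-pxP-intfy1} from the explicit solution, and only then to derive the energy identity \eqref{eq:equ-L2-stima1} by squaring and integrating. Throughout I would use that the classical solution $\ued(t,\cdot)$ is smooth and that, together with its derivatives, it decays as $x\to\pm\infty$. Setting $Q=\px\Ped$ turns the equation into the first-order problem $\delta\px Q-Q=-\ued$, and the unique solution that does not grow like $e^{x/\delta}$ as $x\to+\infty$ is
\begin{equation*}
\px\Ped(t,x)=\frac{1}{\delta}\int_x^{\infty}\ued(t,y)\,e^{(x-y)/\delta}\,dy.
\end{equation*}

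From this representation the boundary behaviour is immediate. Since $\ued(t,\cdot)$ decays, letting $x\to\pm\infty$ gives $\px\Ped(t,\pm\infty)=0$; moreover the kernel $\tfrac1\delta e^{(x-y)/\delta}$ (supported on $y>x$) has $L^1$-mass equal to $1$, so Young's inequality gives $\px\Ped(t,\cdot)\in L^2(\R)$ with $\norm{\px\Ped(t,\cdot)}_{L^2(\R)}\le\norm{\ued(t,\cdot)}_{L^2(\R)}$, and differentiating the identity once more controls $\pxx\Ped(t,\cdot)$ in $L^2(\R)$ as well. Integrating $\px\Ped$ from $-\infty$, using the boundary condition $\Ped(t,-\infty)=0$ from \eqref{eq:OHw}, and exchanging the order of integration yields
\begin{equation*}
\Ped(t,\infty)=\int_{-\infty}^{\infty}\px\Ped(t,z)\,dz=\int_{\R}\ued(t,y)\,dy,
\end{equation*}
so that $\Ped(t,\infty)=0$ reduces to the vanishing of the total mass $\int_\R\ued(t,\cdot)\,dx$. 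The latter I would obtain by integrating the first equation of \eqref{eq:OHepsw1} over $\R$ --- the flux and diffusion terms drop out by the decay just established --- and propagating in time the mean-zero structure of the data encoded in \eqref{eq:assinit}, \eqref{eq:L-2P01} and the approximation \eqref{eq:u0eps}.

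Once the boundary values are secured, the energy identity \eqref{eq:equ-L2-stima1} is a one-line computation. Squaring the elliptic equation gives
\begin{equation*}
\ued^2=\big(-\delta\pxx\Ped+\px\Ped\big)^2=\delta^2(\pxx\Ped)^2-2\delta\,\pxx\Ped\,\px\Ped+(\px\Ped)^2,
\end{equation*}
and integrating over $\R$ the cross term vanishes, since $\int_\R\pxx\Ped\,\px\Ped\,dx=\tfrac12\big[(\px\Ped)^2\big]_{x=-\infty}^{x=+\infty}=0$ by $\px\Ped(t,\pm\infty)=0$; this leaves exactly $\delta^2\norm{\pxx\Ped}_{L^2(\R)}^2+\norm{\px\Ped}_{L^2(\R)}^2=\norm{\ued}_{L^2(\R)}^2$.

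The routine part is this last computation. The hard part will be the rigorous justification of the decay at $x=\pm\infty$ --- so that all boundary terms above are legitimately zero --- and, above all, the identity $\Ped(t,\infty)=\int_\R\ued\,dx$ together with the time-propagation of $\int_\R\ued(t,\cdot)\,dx=0$; once the spatial decay of $\ued$ and the conservation of mass are in place, the rest is integration by parts.
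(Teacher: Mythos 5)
Your derivation of the energy identity \eqref{eq:equ-L2-stima1} coincides with the paper's own (square the elliptic equation, observe that the cross term $-2\delta\,\pxx\Ped\,\px\Ped=-\delta\px\big((\px\Ped)^2\big)$ is an exact derivative that integrates to zero by the boundary values), and your explicit-solution argument giving $\px\Ped(t,\pm\infty)=0$ and the bound $\norm{\px\Ped(t,\cdot)}_{L^2(\R)}\le\norm{\ued(t,\cdot)}_{L^2(\R)}$ is sound. The gap lies in the remaining boundary identity $\Ped(t,\infty)=0$. You correctly reduce it to $\int_\R\ued(t,x)\,dx=0$, but the argument you offer to close this reduction --- integrate the first equation of \eqref{eq:OHepsw1} over $\R$ and propagate the mean-zero structure of the data --- is circular. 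That integration yields
\begin{equation*}
\frac{d}{dt}\int_\R\ued(t,x)\,dx=\gamma\int_\R\Ped(t,x)\,dx,
\end{equation*}
so to conclude you need $\int_\R\Ped(t,\cdot)\,dx=0$, or at least its finiteness and control. But that is precisely the content of the later Lemma \ref{lm:p8}, whose proof in the paper rests on \eqref{eq:P-pxP-intfy1}, the very statement you are proving. Worse, the quantity $\int_\R\Ped\,dx$ is not even defined unless the mean-zero property already holds: by your own identity $\Ped(t,\infty)=\int_\R\ued(t,y)\,dy$, a nonzero mean forces $\Ped$ to have a nonzero limit at $+\infty$, hence $\Ped\notin L^1(\R)$, and the propagation ODE cannot be run. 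In the paper's logical order, the zero mean of $\ued(t,\cdot)$ for $t>0$ is a \emph{consequence} of this lemma, not an admissible input to it.

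The fix is the paper's direct argument, which needs no mean-zero input and uses exactly the decay hypotheses you already assume: write the first equation of \eqref{eq:OHepsw1} as $\gamma\Ped=\pt\ued+\px f(\ued)-\pxx\ued$ and let $x\to+\infty$; since $\ued$ together with its $t$- and $x$-derivatives vanishes there, $\Ped(t,\infty)=0$ follows at once, and differentiating the equation in $x$ and letting $x\to\pm\infty$ gives $\px\Ped(t,\pm\infty)=0$ the same way. You may keep your ODE representation for $\px\Ped$ (it is a perfectly good, indeed more quantitative, route to $\px\Ped(t,\pm\infty)=0$ and to the $L^2$ bounds), but you must replace the mean-zero reduction for $\Ped(t,\infty)$ by this limit argument; with that replacement the rest of your proof, including the squaring step, is correct.
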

\begin{proof}
We begin by proving that \eqref{eq:P-pxP-intfy1} holds.

Differentiating the first equation of \eqref{eq:OHepsw1} with respect to $x$, we have
\begin{equation}
\label{eq:pxu}
\px(\pt \ued+\px f(\ued)-\pxx\ued)=\gamma\px\Ped.
\end{equation}
From the the smoothness of $\ued$, it follows from \eqref{eq:OHepsw1} and \eqref{eq:pxu} that
\begin{align*}
&\lim_{x\to\infty}(\pt \ued +\px f(\ued)-\pxx\ued)=\gamma\Ped(t,\infty)=0,\\
&\lim_{x\to -\infty}\px(\pt \ued+\px f(\ued)-\pxx\ued)=\gamma\px\Ped(t,-\infty)=0,\\
&\lim_{x\to\infty}\px(\pt \ued+ \px f(\ued)-\pxx\ued)=\gamma\px\Ped(t,\infty)=0,
\end{align*}
which gives \eqref{eq:P-pxP-intfy1}.

Let us show that \eqref{eq:equ-L2-stima1} holds.
Squaring the equation for $\Ped$ in \eqref{eq:OHepsw1}, we get
\begin{equation*}
\delta^2(\pxx\Ped)^2+(\px\Ped)^2 - \delta\px((\px\Ped)^2)=\ued^2.
\end{equation*}
Therefore, \eqref{eq:equ-L2-stima1} follows from  \eqref{eq:P-pxP-intfy1} and an integration on $\R$.
\end{proof}

\begin{lemma}
\label{lm:2}
For each $t\in(0,\infty)$,
\begin{align}
\label{eq:L-infty-Px}
\sqrt{\delta}\norm{\px\Ped(t, \cdot)}_{L^{\infty}(\R)}&\le \norm{\ued(t,\cdot)}_{L^2(\R)},\\
\label{eq:uP}
\int_{\R}\ued(t,x)\Ped(t,x) dx&\le \norm{\ued(t,\cdot)}^2_{L^2(\R)}.
\end{align}
\end{lemma}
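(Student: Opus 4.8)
The plan is to treat the two bounds separately, in each case reducing everything to the $L^2$ identity \eqref{eq:equ-L2-stima1} and the vanishing boundary data \eqref{eq:P-pxP-intfy1} already established in Lemma~\ref{lm:cns1}.

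For \eqref{eq:L-infty-Px} I would work with $Q:=\px\Ped$. Since $Q(t,-\infty)=0$ by \eqref{eq:P-pxP-intfy1}, the fundamental theorem of calculus gives, for every $x\in\R$,
\[
Q(t,x)^2=2\int_{-\infty}^{x}\px\Ped\,\pxx\Ped\dy.
\]
The point is to split the integrand by Young's inequality with exactly the weight $\delta$ that matches \eqref{eq:equ-L2-stima1}, namely $2\,\px\Ped\,\pxx\Ped\le \frac1\delta(\px\Ped)^2+\delta(\pxx\Ped)^2$. Integrating over $(-\infty,x)$, bounding the partial integrals by the full $L^2$ norms, and multiplying by $\delta$ then yields
\[
\delta\,Q(t,x)^2\le \norm{\px\Ped(t,\cdot)}_{L^2(\R)}^2+\delta^2\norm{\pxx\Ped(t,\cdot)}_{L^2(\R)}^2=\norm{\ued(t,\cdot)}_{L^2(\R)}^2,
\]
the last equality being precisely \eqref{eq:equ-L2-stima1}. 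Taking the supremum over $x$ and a square root gives \eqref{eq:L-infty-Px}.

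For \eqref{eq:uP} I would multiply the elliptic equation $-\delta\pxx\Ped+\px\Ped=\ued$ by $\Ped$ and integrate over $\R$. Integrating both terms by parts, the boundary contributions at $\pm\infty$ vanish: the term $\int_\R\px\Ped\,\Ped\dx=\tfrac12[\Ped^2]_{-\infty}^{\infty}$ disappears because $\Ped$ tends to $0$ at $\pm\infty$, while the boundary term $-\delta[\px\Ped\,\Ped]_{-\infty}^{\infty}$ coming from $-\delta\int_\R\pxx\Ped\,\Ped\dx$ disappears because $\px\Ped\to0$ at $\pm\infty$ by \eqref{eq:P-pxP-intfy1}. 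What survives is
\[
\int_\R\ued\Ped\dx=\delta\int_\R(\px\Ped)^2\dx=\delta\,\norm{\px\Ped(t,\cdot)}_{L^2(\R)}^2.
\]
Since $0<\delta<1$ and since \eqref{eq:equ-L2-stima1} gives $\norm{\px\Ped(t,\cdot)}_{L^2(\R)}^2\le\norm{\ued(t,\cdot)}_{L^2(\R)}^2$, the right-hand side is at most $\norm{\ued(t,\cdot)}_{L^2(\R)}^2$, which is \eqref{eq:uP}.

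The computations are routine; genuine care is needed only twice. In \eqref{eq:L-infty-Px} the weight in Young's inequality must be chosen to reproduce the exact combination $\norm{\px\Ped}_{L^2(\R)}^2+\delta^2\norm{\pxx\Ped}_{L^2(\R)}^2$ of \eqref{eq:equ-L2-stima1}; any other choice loses the sharp constant. In \eqref{eq:uP} the only delicate point is the vanishing of the boundary terms, which requires knowing that $\Ped(t,\cdot)$ itself (not merely $\px\Ped$) tends to $0$ at $\pm\infty$; this follows from $\px\Ped(t,\cdot)\in L^2(\R)$ together with $\Ped(t,\cdot)\in L^2(\R)$, so that $\Ped(t,\cdot)\in H^1(\R)$ and hence decays at infinity. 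I expect this boundary justification to be the main, if mild, obstacle.
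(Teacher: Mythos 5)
Your proposal is correct and follows the paper's own proof almost verbatim: your weighted Young inequality $2\,\px\Ped\,\pxx\Ped\le \frac1\delta(\px\Ped)^2+\delta(\pxx\Ped)^2$ is, after multiplication by $\delta$, exactly the paper's pointwise inequality $\delta\px\bigl((\px\Ped)^2\bigr)\le \delta^2(\pxx\Ped)^2+(\px\Ped)^2$ (obtained there by expanding $0\le(-\delta\pxx\Ped+\px\Ped)^2$), and the subsequent integration from $-\infty$ using $\px\Ped(t,-\infty)=0$ together with \eqref{eq:equ-L2-stima1} is identical. For \eqref{eq:uP} the paper performs the same multiplication by $\Ped$, integration by parts over $\R$, the identity $\int_\R\ued\Ped\dx=\delta\int_\R(\px\Ped)^2\dx$, and the conclusion via $0<\delta<1$ and \eqref{eq:equ-L2-stima1}.

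The one place you deviate is the justification that $\Ped(t,\cdot)$ vanishes at $\pm\infty$, and as stated it is fragile. You invoke $\Ped(t,\cdot)\in L^2(\R)$ to get $\Ped(t,\cdot)\in H^1(\R)$ and hence decay; but in the paper's logical order the $L^2$ bound on $\Ped$ is \eqref{eq:l2P} of Lemma~\ref{lm:P-infty}, which is proved \emph{after} this lemma and whose proof uses \eqref{eq:uP} (through Lemma~\ref{lm:l2-u}), so quoting it here would be circular. It is legitimate only if you mean the purely qualitative, $\delta$-dependent integrability of $\Ped(t,\cdot)$ coming from the well-posedness theory of the approximation \cite{CHK:ParEll}, which you should then say explicitly. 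The non-circular route, which is what the paper implicitly does, is: $\Ped(t,\infty)=0$ is already part of \eqref{eq:P-pxP-intfy1} in Lemma~\ref{lm:cns1}, while $\Ped(t,-\infty)=0$ is built into the formulation \eqref{eq:OHw} and the construction of $\Ped$. Note finally that for the \emph{inequality} \eqref{eq:uP} you need even less: using only $\Ped(t,\infty)=0$ and $\px\Ped(t,\pm\infty)=0$ (with finite limits of $\Ped$ at $\pm\infty$), the surviving boundary contribution in your integration by parts is $-\tfrac12\Ped(t,-\infty)^2\le 0$, so one still obtains $\int_\R\ued\Ped\dx\le\delta\int_\R(\px\Ped)^2\dx$, which suffices to conclude.
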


\begin{proof}
We begin by proving that \eqref{eq:L-infty-Px} holds.\\
Observe that
\begin{equation*}
0\le (-\delta \pxx\Pe + \px\Pe)^2= \delta^2(\pxx\Pe)^2 +(\px\Pe)^2 - \delta\px((\px\Pe)^2),
\end{equation*}
that is,
\begin{equation}
\label{eq:equa-pxP}
\delta\px((\px\Ped)^2)\le \delta^2(\pxx\Ped)^2 +(\px\Ped)^2.
\end{equation}
Integrating \eqref{eq:equa-pxP} on $(-\infty, x)$, we have
\begin{equation}
\label{eq:equa-pxP1}
\begin{split}
\delta(\px\Ped)^2  &\le \delta^2\int_{-\infty}^{x}(\pxx\Ped)^2 dx +\int_{-\infty}^{x}(\px\Ped)^2 dx\\
&\le \delta^2\int_{\R}(\pxx\Ped)^2 dx +\int_{\R}(\px\Ped)^2 dx.
\end{split}
\end{equation}
It follows from \eqref{eq:equ-L2-stima1} and \eqref{eq:equa-pxP1} that
\begin{equation*}
\delta(\px\Ped)^2\le \delta^2\int_{\R}(\pxx\Ped)^2 dx +\int_{\R}(\px\Ped)^2 dx= \norm{\ued(t,\cdot)}^2_{L^2(\R)}.
\end{equation*}
Therefore,
\begin{equation*}
\sqrt{\delta}\vert \px\Ped(t,x)\vert \le \norm{\ued(t,\cdot)}_{L^2(\R)},
\end{equation*}
which gives \eqref{eq:L-infty-Px}.

Finally, we prove \eqref{eq:uP}.
Multiplying by $\Ped$ the equation for $\Ped$ in \eqref{eq:OHepsw1}, we get
\begin{equation*}
-\delta\Ped\pxx\Ped + \Ped\px\Ped= \ued\Ped.
\end{equation*}
An integration on $\R$ and \eqref{eq:P-pxP-intfy1} give
\begin{align*}
\int_{\R}\ued\Ped dx=&\frac{1}{2}\int_{\R}\px(\Pe)^2 dx - \delta \int_{\R}\Ped\pxx\Ped dx\\
=&-\delta\int_{\R}\Ped\pxx\Ped dx=\delta\int_{\R}(\px\Ped)^2 dx,
\end{align*}
that is
\begin{equation*}
\int_{\R}\ued\Ped dx = \delta\int_{\R}(\px\Ped)^2 dx.
\end{equation*}
Since $0<\delta <1$, from \eqref{eq:equ-L2-stima1}, we have \eqref{eq:uP}.
\end{proof}

\begin{lemma}
\label{lm:l2-u}
For each $t\in(0,\infty)$, the following inequality holds
\begin{equation}
\label{eq:l2-u}
\norm{\ued(t,\cdot)}_{L^2(\R)}^2+2e^{2\gamma t}
\int_0^t e^{-2\gamma s}\norm{\px \ued(s,\cdot)}^2_{L^2(\R)}ds\le e^{2\gamma t}\norm{u_{0}}^2_{L^2(\R)}.
\end{equation}
In particular, we have
\begin{equation}
\label{eq:h2-P}
\norm{\px \Ped(t,\cdot)}_{L^2(\R)},\,\delta \norm{\pxx \Ped(t,\cdot)}_{L^2(\R)},\,\sqrt{\delta}\norm{\px\Ped(t,\cdot)}_{L^\infty(\R)}\le  e^{\gamma t}\norm{u_{0}}_{L^2(\R)}.
\end{equation}
\end{lemma}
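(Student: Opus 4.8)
The plan is to derive the $L^2$ energy estimate \eqref{eq:l2-u} by the standard multiply-by-$\ued$-and-integrate technique, and then deduce \eqref{eq:h2-P} immediately from the already-established identities of Lemmas \ref{lm:cns1} and \ref{lm:2}. First I would multiply the first equation of \eqref{eq:OHepsw1} by $\ued$ and integrate over $\R$. The time term yields $\frac{1}{2}\frac{d}{dt}\norm{\ued(t,\cdot)}^2_{L^2(\R)}$; the flux term $\int_\R \ued\,\px f(\ued)\dx$ integrates to zero, since it equals $\int_\R \px\big(\int^{\ued} s f'(s)\,ds\big)\dx$ and the smoothness/decay of $\ued$ kills the boundary terms; the diffusion term $\int_\R \ued\,\pxx\ued\dx = -\norm{\px\ued(t,\cdot)}^2_{L^2(\R)}$ after integration by parts; and the coupling term contributes $\gamma\int_\R\ued\Ped\dx$. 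This produces the differential inequality
\begin{equation*}
\frac{1}{2}\frac{d}{dt}\norm{\ued(t,\cdot)}^2_{L^2(\R)}+\norm{\px\ued(t,\cdot)}^2_{L^2(\R)}=\gamma\int_\R\ued\Ped\dx.
\end{equation*}

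The key step is to control the coupling term, and here I would invoke \eqref{eq:uP} from Lemma \ref{lm:2}, which gives $\int_\R\ued\Ped\dx\le\norm{\ued(t,\cdot)}^2_{L^2(\R)}$. Substituting this bound yields
\begin{equation*}
\frac{d}{dt}\norm{\ued(t,\cdot)}^2_{L^2(\R)}+2\norm{\px\ued(t,\cdot)}^2_{L^2(\R)}\le 2\gamma\norm{\ued(t,\cdot)}^2_{L^2(\R)}.
\end{equation*}
I would then multiply through by the integrating factor $e^{-2\gamma t}$, so that the left side becomes $\frac{d}{dt}\big(e^{-2\gamma t}\norm{\ued(t,\cdot)}^2_{L^2(\R)}\big)+2e^{-2\gamma t}\norm{\px\ued(t,\cdot)}^2_{L^2(\R)}$. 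Integrating over $(0,t)$ and using $\norm{u_{\delta,0}}_{L^2(\R)}\le\norm{u_0}_{L^2(\R)}$ from \eqref{eq:u0eps}, then multiplying back by $e^{2\gamma t}$, gives exactly \eqref{eq:l2-u}.

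For the final bounds \eqref{eq:h2-P}, I would simply drop the nonnegative dissipation integral in \eqref{eq:l2-u} to obtain $\norm{\ued(t,\cdot)}_{L^2(\R)}\le e^{\gamma t}\norm{u_0}_{L^2(\R)}$, and then feed this into the identity \eqref{eq:equ-L2-stima1} (giving the bounds on $\norm{\px\Ped}_{L^2(\R)}$ and $\delta\norm{\pxx\Ped}_{L^2(\R)}$) and into \eqref{eq:L-infty-Px} (giving the bound on $\sqrt{\delta}\norm{\px\Ped}_{L^\infty(\R)}$). The main obstacle is really just the sign bookkeeping in the energy identity, together with justifying the vanishing of the flux boundary term; the genuinely useful input is that Lemma \ref{lm:2} converts the otherwise problematic nonlocal coupling $\gamma\int\ued\Ped\dx$ into a term proportional to $\norm{\ued}^2_{L^2(\R)}$, which is precisely what makes the Grönwall argument close.
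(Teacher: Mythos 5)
Your proposal is correct and follows essentially the same route as the paper: multiply the first equation of \eqref{eq:OHepsw1} by $\ued$, integrate over $\R$ (the flux term vanishing, the diffusion term giving $-2\norm{\px\ued}^2_{L^2(\R)}$), control the nonlocal coupling via \eqref{eq:uP}, and close with Gronwall's lemma and \eqref{eq:u0eps}; the bounds \eqref{eq:h2-P} are then read off from \eqref{eq:equ-L2-stima1}, \eqref{eq:L-infty-Px} and \eqref{eq:l2-u} exactly as in the paper. Your write-up with the explicit integrating factor $e^{-2\gamma t}$ simply spells out the Gronwall step that the paper leaves implicit.
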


\begin{proof}
Due to \eqref{eq:OHepsw1} and \eqref{eq:uP},
\begin{align*}
\frac{d}{dt}\int_{\R} \ued^2dx=&2\int_{\R} \ued\pt\ued dx\\
=&2\int_{\R}\ued\pxx\ued dx-2\int_{\R} \ued f'(\ued)\px\ued dx+2\gamma\int_{\R} \ued\Ped dx\\
\le&-2\int_{\R}(\px\ued)^2 dx+2\gamma\norm{\ued(t,\cdot)}_{L^2(\R)}^2.
\end{align*}
The Gronwall Lemma and  \eqref{eq:u0eps} give \eqref{eq:l2-u}.

Finally, \eqref{eq:h2-P} follows from \eqref{eq:equ-L2-stima1}, \eqref{eq:L-infty-Px} and \eqref{eq:l2-u}.
\end{proof}

\begin{lemma}
\label{lm:p8}
For each $t\ge 0$, we have that
\begin{align}
\label{eq:intp-infty}
\int_{0}^{-\infty}\Ped(t,x)dx&=a_{\delta}(t), \\
\label{eq:int+infty}
\int_{0}^{\infty}\Ped(t,x)dx&=a_{\delta}(t),
\end{align}
where
\begin{equation}
a_{\delta}(t)=\frac{\delta}{\gamma}\ptx\Ped(t,0)- \frac{1}{\gamma}\pt \Ped(t,0)+ \frac{1}{\gamma}f(0)-\frac{1}{\gamma}f(\ued(t,0)) + \frac{1}{\gamma}\px\ued(t,0).
\end{equation}
In particular,
\begin{equation}
\label{eq:Pmedianulla}
\int_{\R}\Ped(t,x)dx=0, \quad t\geq 0.
\end{equation}
\end{lemma}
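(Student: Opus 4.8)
The plan is to exhibit $\gamma\Ped$ as an exact $x$-derivative of an explicit quantity $G_{\delta}$, and then to integrate this identity separately over the half-lines $(0,+\infty)$ and $(-\infty,0)$, reading off the endpoint values of $G_{\delta}$ from Lemma \ref{lm:cns1} together with the spatial decay of $\ued$ and its derivatives. The crucial feature is that $G_{\delta}$ will have the \emph{same} limit at $+\infty$ and at $-\infty$, so that both \eqref{eq:intp-infty} and \eqref{eq:int+infty} reduce to one and the same quantity $a_{\delta}(t)$; the vanishing of the full-line mean \eqref{eq:Pmedianulla} will then follow simply by adding the two contributions.

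First I would use the first equation of \eqref{eq:OHepsw1} to write $\gamma\Ped=\pt\ued+\px f(\ued)-\pxx\ued$, and differentiate the second equation of \eqref{eq:OHepsw1} with respect to $t$ to obtain $\pt\ued=\ptx\Ped-\delta\ptxx\Ped$. Substituting the latter into the former and recognizing each term as an $x$-derivative yields the key identity
\begin{equation*}
\gamma\Ped=\px G_{\delta},\qquad G_{\delta}:=\pt\Ped-\delta\ptx\Ped+f(\ued)-\px\ued,
\end{equation*}
which is verified directly from $\px G_{\delta}=\ptx\Ped-\delta\ptxx\Ped+\px f(\ued)-\pxx\ued=\pt\ued+\px f(\ued)-\pxx\ued$.

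Next I would evaluate $G_{\delta}$ at the three relevant points. At $x=0$ one has $G_{\delta}(t,0)=\pt\Ped(t,0)-\delta\ptx\Ped(t,0)+f(\ued(t,0))-\px\ued(t,0)$. At $\pm\infty$, the smoothness and decay of $\ued$ give $\ued(t,\pm\infty)=\px\ued(t,\pm\infty)=0$, hence $f(\ued(t,\pm\infty))=f(0)$; moreover $\px\Ped(t,\pm\infty)=0$ by \eqref{eq:P-pxP-intfy1}, while $\Ped(t,\pm\infty)=0$ (at $+\infty$ by \eqref{eq:P-pxP-intfy1}, at $-\infty$ by the boundary condition $\Ped(t,-\infty)=0$ carried by the approximation), so that differentiating these vanishing identities in $t$ gives $\pt\Ped(t,\pm\infty)=\ptx\Ped(t,\pm\infty)=0$. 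Therefore $G_{\delta}(t,+\infty)=G_{\delta}(t,-\infty)=f(0)$, and integrating $\gamma\Ped=\px G_{\delta}$ produces
\begin{align*}
\gamma\int_0^{\infty}\Ped\,dx&=G_{\delta}(t,\infty)-G_{\delta}(t,0)=f(0)-G_{\delta}(t,0),\\
\gamma\int_0^{-\infty}\Ped\,dx&=G_{\delta}(t,-\infty)-G_{\delta}(t,0)=f(0)-G_{\delta}(t,0).
\end{align*}
Dividing by $\gamma$ identifies the common right-hand side with exactly $a_{\delta}(t)$, which proves \eqref{eq:intp-infty} and \eqref{eq:int+infty}. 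Finally, since $\int_0^{-\infty}\Ped\,dx=-\int_{-\infty}^0\Ped\,dx$, we get $\int_{-\infty}^0\Ped\,dx=-a_{\delta}(t)$, whence $\int_{\R}\Ped\,dx=\int_{-\infty}^0\Ped\,dx+\int_0^{\infty}\Ped\,dx=-a_{\delta}(t)+a_{\delta}(t)=0$, giving \eqref{eq:Pmedianulla}.

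The main obstacle is the rigorous justification of the boundary limits at $\pm\infty$: once the identity $\gamma\Ped=\px G_{\delta}$ is in hand everything is algebraic, but one must argue that the decay of $\ued,\px\ued$ and of $\px\Ped$ (available through Lemma \ref{lm:cns1} and Lemma \ref{lm:l2-u}) is inherited, after differentiation in $t$ and interchange of $\pt$ with the limit $x\to\pm\infty$, by $\pt\Ped$ and $\ptx\Ped$, and in particular that $\Ped(t,-\infty)=0$ so that $\int_{-\infty}^0\Ped\,dx$ converges. This is precisely where the regularity of the solution of \eqref{eq:OHepsw1} and the decay/boundary structure built into the elliptic equation are needed.
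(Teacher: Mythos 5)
Your proof is correct and essentially the same as the paper's: both integrate the two equations of \eqref{eq:OHepsw1} from $0$ to $x$, send $x\to\pm\infty$ using the boundary values from Lemma \ref{lm:cns1} (together with $\Ped(t,-\infty)=0$ and the decay of $\ued$, $\px\ued$), and identify the common value $a_\delta(t)$ at both ends. Your packaging of the computation as the exact-derivative identity $\gamma\Ped=\px G_\delta$, with $G_\delta=\pt\Ped-\delta\ptx\Ped+f(\ued)-\px\ued$, is only a reorganization of the same algebra—indeed it is precisely the identity \eqref{eq:1554} that the paper itself derives later, in the proof of Lemma \ref{lm:P-infty}.
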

\begin{proof}
We begin by observing that, integrating the second equation of \eqref{eq:OHepsw1} on $(0,x)$, we have that
\begin{equation}
\label{eq:P-in-0}
\int_{0}^{x} \ued(t,y)dy = \Ped(t,x)-\Ped(t,0)-\delta\px\Ped(t,x)+\delta\px\Ped(t,0).
\end{equation}
It follows from \eqref{eq:P-pxP-intfy1} that
\begin{equation}
\label{eq:lim-int-u}
\lim_{x\to -\infty}\int_{0}^{x} \ued(t,y)dy=\int_{0}^{-\infty}\ued(t,x)dx =  \delta\px\Ped(t,0) -  \Ped(t,0).
\end{equation}
Differentiating \eqref{eq:lim-int-u} with respect to $t$, we get
\begin{equation}
\label{eq:lim-int-u-in-t}
\frac{d}{dt}\int_{0}^{-\infty}\ued(t,x)dx= \int_{0}^{-\infty}\pt\ued(t,x)dx=\delta\ptx\Ped(t,0) - \pt \Ped(t,0).
\end{equation}
Integrating the first equation of \eqref{eq:OHepsw1} on $(0,x)$, we obtain that
\begin{equation}
\begin{split}
\label{eq:int-1-eq}
\int_{0}^{x}\pt\ued(t,y) dy &+ f(\ued(t,x))-f(\ued(t,0))\\
&-\px\ued(t,x)+ \px\ued(t,0)=\gamma\int_{0}^{x}\Ped(t,y)dy.
\end{split}
\end{equation}
Being $\ued$ a smooth solution of \eqref{eq:OHepsw1}, we get
\begin{equation}
\label{eq:500}
\lim_{x\to-\infty}\Big( f(\ued(t,x))-\px\ued(t,x)\Big)=f(0).
\end{equation}
Sending $x\to -\infty$ in \eqref{eq:int-1-eq}, from \eqref{eq:lim-int-u-in-t} and \eqref{eq:500}, we have
\begin{align*}
\gamma\int_{0}^{-\infty}\Ped(t,x)dx= &\delta\ptx\Ped(t,0) - \pt \Ped(t,0)\\
&+f(0)-f(\ued(t,0)) +  \px\ued(t,0),
\end{align*}
which gives \eqref{eq:intp-infty}.

Let us show that \eqref{eq:int+infty} holds. We begin by observing that, for \eqref{eq:P-pxP-intfy1} and \eqref{eq:P-in-0},
\begin{equation*}
\int_{0}^{\infty}\ued(t,x)dx =  \delta\px\Ped(t,0) -  \Ped(t,0).
\end{equation*}
Therefore,
\begin{equation}
\label{eq:lim-int-u-1}
\lim_{x\to \infty}\int_{0}^{x}\pt \ued(t,y)dy=\int_{0}^{\infty}\pt\ued(t,x)dx =  \delta\ptx\Ped(t,0) -  \pt\Ped(t,0).
\end{equation}
Again by the regularity of $\ued$,
\begin{equation}
\label{eq:510}
\lim_{x\to\infty}\Big( f(\ued(t,x))-\px\ued(t,x)\Big)=f(0).
\end{equation}
It follows from \eqref{eq:int-1-eq}, \eqref{eq:lim-int-u-1} and \eqref{eq:510} that
\begin{align*}
\gamma\int_{0}^{\infty}\Ped(t,x)dx= &\delta\ptx\Ped(t,0) - \pt \Ped(t,0)\\
&+f(0)-f(\ued(t,0)) + \px\ued(t,0),
\end{align*}
which gives \eqref{eq:int+infty}.

Finally, we prove \eqref{eq:Pmedianulla}. It follows from \eqref{eq:intp-infty} that
\begin{equation*}
\int_{-\infty}^{0}\Ped(t,x)dx = -a_{\delta}(t).
\end{equation*}
Therefore, for \eqref{eq:int+infty},
\begin{align*}
\int_{-\infty}^{0}\Ped(t,x)dx+\int_{0}^{\infty}\Ped(t,x)=\int_{\R} \Ped(t,x)dx =-a_{\delta}(t)+a_{\delta}(t)=0,
\end{align*}
that is \eqref{eq:Pmedianulla}.
\end{proof}

Lemma \ref{lm:p8} says that $\Ped(t,x)$  is integrable at $\pm\infty$.
Therefore, for each $t\ge 0$, we can consider the following function
\begin{equation}
\label{eq:F1}
\Fed(t,x)=\int_{-\infty}^{x}\Ped(t,y)dy.
\end{equation}

\begin{lemma}
\label{lm:P-infty}
Let $T>0$. There exists  $C(T)>0$, independent on $\delta$, such that
\begin{align}
\label{eq:P-infty}
\norm{\Ped}_{L^{\infty}(I_{T,1})}&\le C(T),\\
\label{eq:l2P}
\norm{\Ped(t,\cdot)}_{L^2(\R)}&\le C(T),\\
\label{eq:l2pxP}
\delta\norm{\px\Ped(t,\cdot)}_{L^2(\R)}&\le C(T),
\end{align}
where
\begin{equation}
\label{eq:defI}
I_{T,1}=(0,T)\times\R.
\end{equation}
In particular, we have
\begin{equation}
\label{eq:pt-px-P}
\delta\left\vert\int_{0}^{t}\!\!\!\int_{\R}\Ped\ptx\Ped ds dx\right\vert\le C(T), \quad 0<t<T.
\end{equation}
\end{lemma}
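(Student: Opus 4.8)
The plan is to establish the four bounds in the order \eqref{eq:l2pxP}, \eqref{eq:l2P}, \eqref{eq:P-infty}, \eqref{eq:pt-px-P}, each feeding the next. Bound \eqref{eq:l2pxP} is immediate from \eqref{eq:h2-P}, which already gives $\norm{\px\Ped(t,\cdot)}_{L^2(\R)}\le e^{\gamma t}\norm{u_0}_{L^2(\R)}\le C(T)$, so multiplying by $\delta<1$ loses nothing. For the remaining bounds the key idea is to work not with $\Ped$ but with its primitive
\[
w(t,x):=\Ped(t,x)-\delta\px\Ped(t,x).
\]
Integrating the elliptic equation of \eqref{eq:OHepsw1} over $(-\infty,x)$ and using $\px\Ped(t,-\infty)=0$ from \eqref{eq:P-pxP-intfy1} together with $\Ped(t,-\infty)=0$ (the classical solution satisfies $\Ped(t,\cdot)\in L^2(\R)$, so its limits at $\pm\infty$ vanish) identifies $w(t,x)=\int_{-\infty}^x\ued(t,y)\dy$; hence $\px w=\ued$ and $w(t,\pm\infty)=0$. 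Expanding the square and integrating the cross term by parts gives the crucial comparison
\[
\norm{w(t,\cdot)}_{L^2(\R)}^2=\norm{\Ped(t,\cdot)}_{L^2(\R)}^2+\delta^2\norm{\px\Ped(t,\cdot)}_{L^2(\R)}^2\ge\norm{\Ped(t,\cdot)}_{L^2(\R)}^2,
\]
so it is enough to bound $w$ in $L^2(\R)$.

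Integrating the first equation of \eqref{eq:OHepsw1} over $(-\infty,x)$ and using $\ued(t,\pm\infty)=0$ produces the evolution law $\pt w=\gamma\Fed+\px\ued-(f(\ued)-f(0))$. Testing against $w$ over $\R$, the linear terms are favourable: since $\Ped=\px\Fed$, integrations by parts justified by \eqref{eq:Pmedianulla} give $\gamma\int_\R w\Fed\dx=\gamma\delta\norm{\Ped(t,\cdot)}_{L^2(\R)}^2$, while $\int_\R w\px\ued\dx=-\int_\R\ued^2\dx=-\norm{\ued(t,\cdot)}_{L^2(\R)}^2\le0$. Thus
\[
\frac12\frac{d}{dt}\norm{w(t,\cdot)}_{L^2(\R)}^2=\gamma\delta\norm{\Ped(t,\cdot)}_{L^2(\R)}^2-\norm{\ued(t,\cdot)}_{L^2(\R)}^2-\int_\R w\,(f(\ued)-f(0))\dx.
\]

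The main obstacle is the nonlinear term. Here I would use the subquadratic bound $\abs{f(\ued)-f(0)}\le\tfrac{C_0}{2}\ued^2$ from \eqref{eq:assflux1} together with the one\,--\,dimensional Agmon inequality $\norm{w(t,\cdot)}_{L^\infty(\R)}^2\le2\norm{w(t,\cdot)}_{L^2(\R)}\norm{\px w(t,\cdot)}_{L^2(\R)}$ and $\px w=\ued$ to estimate
\[
\Big|\int_\R w\,(f(\ued)-f(0))\dx\Big|\le\frac{C_0}{2}\norm{w(t,\cdot)}_{L^\infty(\R)}\norm{\ued(t,\cdot)}_{L^2(\R)}^2\le C(T)\norm{w(t,\cdot)}_{L^2(\R)}^{1/2},
\]
the last step using $\norm{\ued(t,\cdot)}_{L^2(\R)}\le C(T)$ from \eqref{eq:l2-u}. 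Combining this with $\norm{\Ped}_{L^2(\R)}^2\le\norm{w}_{L^2(\R)}^2$ and $\delta<1$, and discarding the good term $-\norm{\ued}_{L^2(\R)}^2$, reduces the energy identity to a differential inequality of the form $\tfrac{d}{dt}\norm{w(t,\cdot)}_{L^2(\R)}^2\le C(T)\big(1+\norm{w(t,\cdot)}_{L^2(\R)}^2\big)$. Since $w(0,\cdot)=P_{\delta,0}$ with $\norm{P_{\delta,0}}_{L^2(\R)}\le\norm{P_0}_{L^2(\R)}<\infty$ by \eqref{eq:u0eps} and \eqref{eq:L-2P01}, Gronwall's Lemma closes the estimate and yields \eqref{eq:l2P}. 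The sublinear exponent $\tfrac12$ on the right is exactly what makes the nonlinearity harmless, and it is produced by the subquadratic growth of $f$ combined with Agmon's inequality; without this structure the term would be quadratic in the energy and Gronwall would fail.

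Finally, \eqref{eq:P-infty} follows by applying the same Agmon inequality to $\Ped$ (legitimate since $\Ped(t,\infty)=0$ by \eqref{eq:P-pxP-intfy1}): $\norm{\Ped}_{L^\infty(I_{T,1})}^2\le2\sup_{0<t<T}\norm{\Ped(t,\cdot)}_{L^2(\R)}\norm{\px\Ped(t,\cdot)}_{L^2(\R)}\le C(T)$ by \eqref{eq:l2P} and \eqref{eq:h2-P}. For \eqref{eq:pt-px-P} I would instead test the relation $\pt\Ped=\delta\ptx\Ped+\gamma\Fed+\px\ued-(f(\ued)-f(0))$ against $\Ped$; using $\int_\R\Ped\Fed\dx=0$ and integrating in time, the quantity $\delta\int_0^t\!\!\int_\R\Ped\ptx\Ped\ds\dx$ can be isolated as
\begin{equation*}
\begin{split}
\delta\int_0^t\!\!\int_\R\Ped\ptx\Ped\ds\dx={}&\frac12\norm{\Ped(t,\cdot)}_{L^2(\R)}^2-\frac12\norm{\Ped(0,\cdot)}_{L^2(\R)}^2\\
&-\int_0^t\!\!\int_\R\Ped\px\ued\ds\dx+\int_0^t\!\!\int_\R\Ped\,(f(\ued)-f(0))\ds\dx.
\end{split}
\end{equation*}
Each term on the right is $C(T)$: the first two by \eqref{eq:l2P}, the third bounded by $\int_0^t\norm{\px\Ped(s,\cdot)}_{L^2(\R)}\norm{\ued(s,\cdot)}_{L^2(\R)}\ds$ after an integration by parts and \eqref{eq:h2-P}, \eqref{eq:l2-u}, and the last by $\norm{\Ped}_{L^\infty(I_{T,1})}\norm{\ued(s,\cdot)}_{L^2(\R)}^2$ via \eqref{eq:P-infty}, \eqref{eq:l2-u} and the subquadratic bound. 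This establishes \eqref{eq:pt-px-P} and completes the lemma.
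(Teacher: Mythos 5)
Your proposal is correct, and it shares the paper's backbone: you derive the same evolution law $\pt(\Ped-\delta\px\Ped)=\gamma\Fed+\px\ued-(f(\ued)-f(0))$ by integrating both equations of \eqref{eq:OHepsw1} over $(-\infty,x)$, test it against $w=\Ped-\delta\px\Ped$, and exploit the same cancellations coming from \eqref{eq:Pmedianulla}. Where you genuinely diverge is in how the estimate is closed. The paper bounds the nonlinear term by $C_{0}\norm{\Ped}_{L^{\infty}(I_{T,1})}\norm{\ued(t,\cdot)}^{2}_{L^{2}(\R)}$, carries the unknown $\norm{\Ped}_{L^{\infty}(I_{T,1})}$ through Gronwall to reach the conditional bound \eqref{eq:505}, and then feeds that into the interpolation $\Ped^{2}\le 2\norm{\Ped}_{L^{2}}\norm{\px\Ped}_{L^{2}}$ to get a quadratic inequality for $\norm{\Ped}_{L^{\infty}(I_{T,1})}$, whose resolution yields \eqref{eq:P-infty} first and \eqref{eq:l2P} only as a corollary. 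You instead apply the interpolation (Agmon) inequality to $w$ itself inside the energy estimate, using $\px w=\ued$ and \eqref{eq:l2-u} to render the nonlinear term sublinear, $C(T)\norm{w(t,\cdot)}^{1/2}_{L^{2}(\R)}$, in the energy; Gronwall then gives an unconditional $L^{2}$ bound first, and \eqref{eq:P-infty} follows from one further application of Agmon to $\Ped$. This buys you a proof with no bootstrap and no quadratic inequality to solve; it also lets you notice that
\begin{equation*}
\int_{\R}\px\ued\,\Ped\dx-\delta\int_{\R}\px\ued\,\px\Ped\dx=\int_{\R}w\,\px\ued\dx=-\norm{\ued(t,\cdot)}^{2}_{L^{2}(\R)}\le 0
\end{equation*}
is sign-definite, where the paper splits these two terms, applies Young, and must invoke the dissipation integral \eqref{eq:pxU2}. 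Your observation that \eqref{eq:l2pxP} is already contained in \eqref{eq:h2-P}, and your derivation of \eqref{eq:pt-px-P}, coincide in substance with the paper's.

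Two small imprecisions, neither fatal. First, your justification of $\Ped(t,-\infty)=0$ --- ``the classical solution is in $L^{2}$, so its limits vanish'' --- is not an argument: $L^{2}$ membership gives no pointwise limits, and a uniform $L^{2}$ bound on $\Ped$ is precisely what you are in the middle of proving. The fact itself is fine and is obtained exactly as \eqref{eq:P-pxP-intfy1} in Lemma \ref{lm:cns1}, by letting $x\to-\infty$ in the first equation of \eqref{eq:OHepsw1}; the paper uses it just as silently in \eqref{eq:1550}. Second, $w(0,\cdot)$ is not $P_{\delta,0}$ but $P_{\delta,0}-\delta\px P_{\delta,0}$, so $\norm{w(0,\cdot)}^{2}_{L^{2}(\R)}=\norm{P_{\delta,0}}^{2}_{L^{2}(\R)}+\delta^{2}\norm{\px P_{\delta,0}}^{2}_{L^{2}(\R)}$, which is still bounded independently of $\delta$ by \eqref{eq:u0eps}; the conclusion stands.
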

\begin{proof}
Integrating the second equation of \eqref{eq:OHepsw1} on $(-\infty, x)$, for \eqref{eq:P-pxP-intfy1},  we have that
\begin{equation}
\label{eq:1550}
\int_{-\infty}^{x} \ued(t,y)dy=\Ped(t,x) -\delta\px\Ped(t,x).
\end{equation}
Differentiating \eqref{eq:1550} with respect to $t$, we get
\begin{equation}
\label{eq:1551}
\frac{d}{dt}\int_{-\infty}^{x} \ued(t,y)dy=\int_{-\infty}^{x}\pt \ued(t,y)dy=\pt\Ped(t,x) -\delta\ptx\Ped(t,x).
\end{equation}
It follows from an integration of the first equation of \eqref{eq:OHepsw1} on $(-\infty, x)$ and \eqref{eq:F1} that
\begin{equation}
\label{eq:1552}
\int_{-\infty}^{x}\pt\ued(t,y)dy + f(\ued(t,x)) - \px\ued(t,x)=\gamma\Fed(t,x).
\end{equation}
Due to \eqref{eq:1551} and \eqref{eq:1552}, we have
\begin{equation}
\label{eq:1554}
\pt\Ped(t,x)-\delta\ptx\Ped(t,x) =\gamma\Fed(t,x)- f(\ued(t,x)) +\px\ued(t,x).
\end{equation}
Multiplying \eqref{eq:1554} by $\Ped - \delta\px\Ped$, we have
\begin{equation}
\label{eq:1555}
\begin{split}
(\pt\Ped-\delta\ptx\Ped)(\Ped - \delta\px\Ped)= &\gamma\Fed(\Ped - \delta\px\Ped)\\
&- f(\ued)(\Ped - \delta\px\Ped)\\
&+\px\ued(\Ped - \delta\px\Ped).
\end{split}
\end{equation}
Integrating \eqref{eq:1555} on $(0,x)$, we have
\begin{equation}
\label{eq:1222}
\begin{split}
\int_{0}^{x}\pt\Ped\Ped dy&-\delta\int_{0}^{x} \pt\Ped\px\Ped dy\\
&-\delta \int_{0}^{x}\Ped\ptx\Ped dy +\delta^2\int_{0}^{x}\ptx\Ped\px\Ped dy\\
=& \gamma\int_{0}^{x}\Fed\Ped dy - \gamma\delta\int_{0}^{x} \Fed\px\Ped dy\\
&-\int_{0}^{x}f(\ued)\Ped dy + \delta \int_{0}^{x}f(\ued)\px\Ped dy\\
&+\int_{0}^{x}\px\ued\Ped dy - \delta\int_{0}^{x}\px\ued\px\Ped dy.
\end{split}
\end{equation}
We observe that
\begin{equation}
\label{eq:int-by-part}
-\delta \int_{0}^{x}\px\Ped\pt\Ped dy=-\delta\Ped\pt\Ped +\delta\Ped(t,0)\pt\Ped(t,0) + \delta\int_{0}^{x}\Ped\ptx\Ped dy.
\end{equation}
Therefore, \eqref{eq:1222} and \eqref{eq:int-by-part} give
\begin{equation}
\begin{split}
\label{eq:1223}
\int_{0}^{x}\pt\Ped\Ped dy&+ \delta^2\int_{0}^{x}\ptx\Ped\px\Ped dy\\
=& \delta\Ped\pt\Ped  -\delta\Ped(t,0)\pt\Ped(t,0)  + \gamma\int_{0}^{x}\Fed\Ped dy \\
&- \gamma\delta\int_{0}^{x} \Fed\px\Ped dy-\int_{0}^{x}f(\ued)\Ped dy + \delta \int_{0}^{x}f(\ued)\px\Ped dy\\
&+\int_{0}^{x}\px\ued\Ped dy - \delta\int_{0}^{x}\px\ued\px\Ped dy.
\end{split}
\end{equation}
Sending $x\to -\infty$, for \eqref{eq:P-pxP-intfy1}, we get
\begin{equation}
\label{eq:0012}
\begin{split}
\int_{0}^{-\infty}\pt\Ped\Ped dy&+ \delta^2\int_{0}^{-\infty}\ptx\Ped\px\Ped dy\\
=& -\delta\Ped(t,0)\pt\Ped(t,0) + \gamma\int_{0}^{-\infty}\Fed\Ped dy \\
&- \gamma\delta\int_{0}^{-\infty} \Fed\px\Ped dy-\int_{0}^{-\infty}f(\ued)\Ped dy \\
&+ \delta \int_{0}^{-\infty}f(\ued)\px\Ped dy+\int_{0}^{-\infty}\px\ued\Ped dy\\
& - \delta\int_{0}^{-\infty}\px\ued\px\Ped dy,
\end{split}
\end{equation}
while sending $x\to\infty$,
\begin{equation}
\label{eq:0013}
\begin{split}
\int_{0}^{\infty}\pt\Ped\Ped dy&+ \delta^2\int_{0}^{\infty}\ptx\Ped\px\Ped dy\\
=& -\delta\Ped(t,0)\pt\Ped(t,0) + \gamma\int_{0}^{\infty}\Fed\Ped dy- \gamma\delta\int_{0}^{\infty} \Fed\px\Ped dy \\
&-\int_{0}^{\infty}f(\ued)\Ped dy + \delta \int_{0}^{\infty}f(\ued)\px\Ped dy\\
&+\int_{0}^{\infty}\px\ued\Ped dy - \delta\int_{0}^{\infty}\px\ued\px\Ped dy.
\end{split}
\end{equation}
Since
\begin{align*}
\int_{\R}\Ped\pt\Ped dx &=\frac{1}{2}\frac{d}{dt}\int_{\R}\Ped^2dx,\\
\delta^2\int_{\R}\ptx\Ped\px\Ped dx &= \frac{\delta^2}{2}\frac{d}{dt}\int_{\R}(\px\Ped)^2dx,
\end{align*}
it follows from \eqref{eq:0012} and \eqref{eq:0013} that
\begin{equation}
\label{eq:12312}
\begin{split}
\frac{1}{2}\frac{d}{dt}\int_{\R}\Ped^2dx&+\frac{\delta^2}{2}\frac{d}{dt}\int_{\R}(\px\Ped)^2dx\\
=& \gamma\int_{\R}\Fed\Ped dx - \gamma\delta\int_{\R} \Fed\px\Ped dx\\
&-\int_{\R}f(\ued)\Ped dx + \delta \int_{\R}f(\ued)\px\Ped dx\\
&+\int_{\R}\px\ued\Ped dx - \delta\int_{\R}\px\ued\px\Ped dx.
\end{split}
\end{equation}
Due to \eqref{eq:Pmedianulla} and \eqref{eq:F1},
\begin{equation}
\label{eq:F-in-infty}
\begin{split}
2\gamma\int_{\R}\Fed\Ped dx&=2\gamma\int_{\R}\Fed\px\Fed dx =\gamma(\Fed(t,\infty))^2\\
&=\gamma\left( \int_{\R} \Ped(t,x)dx \right)^2=0.
\end{split}
\end{equation}
\eqref{eq:12312} and \eqref{eq:F-in-infty} give
\begin{equation}
\label{eq:12234}
\begin{split}
&\frac{d}{dt}\left(\int_{\R}\Ped^2dx + \delta^2\int_{\R}(\px\Ped)^2dx\right)\\
&\quad=-2\gamma\delta\int_{\R} \Fed\px\Ped dx -2\int_{\R}f(\ued)\Ped dx\\
&\qquad + 2\delta \int_{\R}f(\ued)\px\Ped dx +2\int_{\R}\px\ued\Ped dx\\
&\qquad - 2\delta\int_{\R}\px\ued\px\Ped dx.
\end{split}
\end{equation}
Thanks to \eqref{eq:P-pxP-intfy1}, \eqref{eq:Pmedianulla} and \eqref{eq:F1},
\begin{equation}
\label{eq:346}
-2\delta\gamma\int_{\R}\px\Ped\Fed dx=2\delta\gamma\int_{\R}\Ped\px\Fed dx = 2\delta\gamma\int_{\R} \Ped^2 dx\le 2\gamma \int_{\R} \Ped^2 dx,
\end{equation}
while for \eqref{eq:P-pxP-intfy1},
\begin{equation}
\label{eq:347}
\begin{split}
2\int_{\R}\px\ued\Ped dx=&-2\int_{\R}\ued\px\Ped dx.
\end{split}
\end{equation}
Hence, from \eqref{eq:assflux1}, \eqref{eq:346} and \eqref{eq:347},  we get
\begin{equation*}
\begin{split}
&\frac{d}{dt}\left(\int_{\R}\Ped^2dx + \delta^2\int_{\R}(\px\Ped)^2dx\right)\\
&\quad\le 2\gamma \int_{\R} \Ped^2 dx -2\int_{\R}f(\ued)\Ped dx + 2\delta \int_{\R}f(\ued)\px\Ped dx \\
&\qquad  -2  \int_{\R}\ued\px\Ped dx- 2\delta\int_{\R}\px\ued\px\Ped dx\\
&\quad\le 2\gamma \int_{\R} \Ped^2 dx+ 2\left\vert \int_{\R}f(\ued)\Ped dx\right\vert + 2\delta\left\vert\int_{\R}f(\ued)\px\Ped dx\right\vert\\
&\qquad   +2  \left\vert\int_{\R}\ued\px\Ped dx\right\vert+ 2\delta\left\vert\int_{\R}\px\ued\px\Ped dx\right\vert\\
&\quad\le 2\gamma \int_{\R} \Ped^2 dx+ 2\int_{\R}\vert f(\ued)\vert\vert\Ped \vert dx + 2\delta\int_{\R}\vert f(\ued)\vert \vert\px\Ped\vert dx\\
&\qquad  + 2\int_{\R}\vert\ued\vert\vert\px\Ped\vert dx + 2\delta\int_{\R}\vert\px\ued\vert\vert\px\Ped\vert dx\\
&\quad\le 2\gamma \int_{\R} \Ped^2 dx + 2C_{0}\int_{\R}\vert\Ped \vert\ued^2 dx + 2C_{0}\delta\int_{\R}\vert\px\Ped\vert\ued^2 dx\\
&\qquad + 2\int_{\R}\vert\ued\vert\vert\px\Ped\vert dx + 2\delta\int_{\R}\vert\px\ued\vert\vert\px\Ped\vert dx.
\end{split}
\end{equation*}
From the Young inequality,
\begin{align*}
2\int_{\R}\vert\px\Ped\vert\vert\ued\vert &\le\norm{\px\Ped(t,\cdot)}^2_{L^{2}(\R)}+  \norm{\ued(t,\cdot)}^2_{L^{2}(\R)},\\
2\delta\int_{\R}\vert\px\ued\vert\vert\px\Ped\vert dx&=\int_{\R}\left\vert\frac{\px\ued}{\sqrt{\gamma}}\right\vert\vert 2\sqrt{\gamma}\delta\px\Ped\vert dx\\
& \le \frac{1}{2\gamma}\norm{\px\ued(t,\cdot)}^2_{L^{2}(\R)} + 2\delta^2\gamma\norm{\px\Ped(t,\cdot)}^2_{L^{2}(\R)}.
\end{align*}
Thus,
\begin{equation}
\label{eq:350}
\begin{split}
\frac{d}{dt}G(t)-2\gamma G(t)\le & \norm{\ued(t,\cdot)}^2_{L^{2}(\R)}+ 2C_{0}\int_{\R}\vert\Ped \vert\ued^2 dx\\  &+2C_{0}\delta\int_{\R}\vert\px\Ped\vert\ued^2 dx + \norm{\px\Ped(t,\cdot)}^2_{L^{2}(\R)}\\
&+ \frac{1}{2\gamma}\norm{\px\ued(t,\cdot)}^2_{L^{2}(\R)},
\end{split}
\end{equation}
where
\begin{equation}
\label{eq:def-di-G}
G(t)=\norm{\Ped(t,\cdot)}^2_{L^2(\R)} + \delta^2\norm{\px\Ped(t,\cdot)}^2_{L^2(\R)}.
\end{equation}
We observe that, from \eqref{eq:l2-u},
\begin{equation}
\label{eq:399}
2C_{0}\int_{\R} \vert\Ped\vert \ued^2 dx\le C_{0}e^{2\gamma t} \norm{\Ped}_{L^{\infty}(I_{T,1})},
\end{equation}
where $I_{T,1}$ is defined in \eqref{eq:defI}.
Since $0<\delta <1$, it follows from \eqref{eq:l2-u} and \eqref{eq:h2-P} that
\begin{equation}
\label{eq:400}
\begin{split}
2C_{0}\delta\int_{\R}\vert\px\Ped\vert \ued^2dx&\le 2C_{0}\delta\norm{\px\Ped(t,\cdot)}_{L^{\infty}(\R)}\norm{\ued(t,\cdot)}^2_{L^2(\R)}\\
&\le 2\sqrt{\delta}C_{0}e^{3\gamma t}\le C_{0}e^{3\gamma t}.
\end{split}
\end{equation}
Again by \eqref{eq:h2-P}, we have that
\begin{equation}
\label{eq:401}
\norm{\px\Ped(t,\cdot)}^2_{L^{2}(\R)}\le C_{0}e^{2\gamma t}.
\end{equation}
Therefore, \eqref{eq:l2-u}, \eqref{eq:400} and \eqref{eq:401} give
\begin{equation*}
\frac{d}{dt}G(t)-2\gamma G(t)\le  C_{0}\left( \norm{\Ped}_{L^{\infty}(I_{T,1})}+ 1 \right)e^{2\gamma t} + C_{0}e^{3\gamma t}+ \frac{1}{2\gamma}\norm{\px\ued(t,\cdot)}^2_{L^{2}(\R)}.
\end{equation*}
The Gronwall Lemma, \eqref{eq:u0eps},  \eqref{eq:l2-u} and \eqref{eq:def-di-G} give
\begin{align*}
&\norm{\Ped(t,\cdot)}^2_{L^2(\R)} + \delta^2\norm{\px\Ped(t,\cdot)}^2_{L^2(\R)}\\
&\quad\le \norm{P_{0}}^2_{L^2(0,\infty)}e^{2\gamma t} +  \left( \norm{\Ped}_{L^{\infty}(I_{T,1})}+ 1 \right)t e^{2\gamma t} + C_{0}t e^{3\gamma t}\\
&\qquad  +\frac{ e^{2\gamma t}}{2\gamma}\int_{0}^{t} e^{-2\gamma s} \norm{\px\ued(s,\cdot)}^2_{L^{2}(\R)}ds\\
&\quad \le  \norm{P_{0}}^2_{L^2(0,\infty)}e^{2\gamma t} +  \left( \norm{\Ped}_{L^{\infty}(I_{T,1})}+ 1 \right)t e^{2\gamma t} + C_{0}t e^{3\gamma t}+C_{0}e^{2\gamma t}.
\end{align*}
Hence,
\begin{equation}
\label{eq:505}
\norm{\Ped(t,\cdot)}^2_{L^2(\R)} + \delta^2\norm{\px\Ped(t,\cdot)}^2_{L^2(\R)}\le C(T)\left( \norm{\Ped}_{L^{\infty}(I_{T,1})}+ 1 \right).
\end{equation}
Due to \eqref{eq:h2-P}, \eqref{eq:505} and the H\"older inequality,
\begin{align*}
\Ped^2(t,x)&\le 2\int_{\R}\vert\Ped\vert\vert\px\Ped\vert dx \le 2\norm{\Ped(t,\cdot)}_{L^2(\R)} \norm{\px\Ped(t,\cdot)}_{L^2(\R)}\\
&\le 2\sqrt{C(T)\left(\norm{\Ped}_{L^{\infty}(I_{T,1})}+1\right)}\sqrt{C_{0}}e^{\gamma t}\le C(T)\left(\norm{\Ped}_{L^{\infty}(I_{T,1})}+1\right).
\end{align*}
Therefore,
\begin{equation*}
\norm{\Ped}^2_{L^{\infty}(I_{T,1})} - C(T) \norm{\Ped}_{L^{\infty}(I_{T,1})} - C(T)\le 0,
\end{equation*}
which gives \eqref{eq:P-infty}.

\eqref{eq:l2P} and \eqref{eq:l2pxP} follow from \eqref{eq:P-infty} and \eqref{eq:505}.

Let us show that \eqref{eq:pt-px-P} holds. Multiplying \eqref{eq:1554} by $\Ped$, an integration on $\R$ and \eqref{eq:F-in-infty} give
\begin{align*}
2\delta\int_{\R}\ptx\Ped\Ped dx =& \frac{d}{dt} \norm{\Ped(t,\cdot)}^2_{L^2(\R)} -2\gamma\int_{\R}\Fed\Ped dx\\
&+2\int_{\R} f(\ued)\Ped dx -2\int_{\R}\px\ued\Ped dx\\
=&\frac{d}{dt} \norm{\Ped(t,\cdot)}^2_{L^2(\R)}+2\int_{\R} f(\ued)\Ped dx -2\int_{\R}\px\ued\Ped dx.
\end{align*}
An integration on $(0,t)$ gives
\begin{align*}
2\delta\int_{0}^{t}\!\!\!\int_{\R}\ptx\Ped\Ped dx =&\norm{\Ped(t,\cdot)}^2_{L^2(\R)}-\norm{P_{\eps,\delta,0}}^2_{L^2(\R)}\\
&+2\int_{0}^{t}\!\!\!\int_{\R}f(\ued)\Ped dx -2 \int_{0}^{t}\!\!\!\int_{\R} \px\ued\Ped dx.
\end{align*}
It follows from \eqref{eq:assflux1}, \eqref{eq:l2-u}, \eqref{eq:P-infty} and \eqref{eq:l2P} that
\begin{align*}
2\delta\left\vert\int_{0}^{t}\!\!\!\int_{\R}\ptx\Ped\Ped dsdx\right\vert \le&\norm{\Ped(t,\cdot)}^2_{L^2(\R)}+\norm{P_{\eps,\delta,0}}^2_{L^2(\R)}\\
&+2\int_{0}^{t}\!\!\!\int_{\R}\vert f(\ued)\vert\vert\Ped\vert dsdx\\
& +2 \int_{0}^{t}\!\!\!\int_{\R} \vert\px\ued\vert\vert\Ped\vert dsdx\\
\le & \norm{P_{\delta,0}}^2_{L^2(\R)}+ 2C(T)\int_{0}^{t}\!\!\!\int_{\R}\ued^2 dsdx\\
 &+2 \int_{0}^{t}\!\!\!\int_{\R} \vert\px\ued\vert\vert\Ped\vert dsdx +C(T)\\
\le & \norm{P_{\delta,0}}^2_{L^2(\R)}  +C(T)\\
&+2 \int_{0}^{t}\!\!\!\int_{\R} \vert\px\ued\vert\vert\Ped\vert dsdx.
\end{align*}
Observe that, thanks to \eqref{eq:l2-u},
\begin{equation}
\label{eq:pxU2}
\begin{split}
&\int_{0}^{t}\norm{\px\ued(s,\cdot)}^2_{L^2(\R)}ds\\
&\quad \le  e^{2\gamma t}\int_{0}^{t}e^{-2\gamma s}\norm{\px\ued(s,\cdot)}^2_{L^2(\R)}ds\le C(T).
\end{split}
\end{equation}
Due to the Young inequality,
\begin{equation}
\label{eq:young}
\begin{split}
&2\int_{\R} \vert\px\ued\vert\vert\Ped\vert dsdx\\
&\quad\le \norm{\Ped(t,\cdot)}^2_{L^2(\R)}+\norm{\px\ued(t,\cdot)}^2_{L^2(\R)}\\
& \quad \le C(T) +\norm{\px\ued(t,\cdot)}^2_{L^2(\R)}.
\end{split}
\end{equation}
Then, from \eqref{eq:pxU2} and  \eqref{eq:young}, we have that
\begin{align*}
&2\int_{0}^{t}\!\!\!\int_{\R}\vert\Ped\vert\vert\px\ued\vert dsdx\\
&\quad \le \int_{0}^{t}\norm{\Ped(s,\cdot)}^2_{L^2(\R)} ds +  \int_{0}^{t}\norm{\px\ued(s,\cdot)}^2_{L^2(\R)} ds\le C(T)  .
\end{align*}
Therefore,
\begin{equation*}
2\delta\left\vert\int_{0}^{t}\!\!\!\int_{\R}\Ped\ptx\Ped dsdx\right\vert\le \norm{P_{\eps,0}}^2_{L^2(\R)}+C(T),
\end{equation*}
which gives \eqref{eq:pt-px-P}.
\end{proof}
\begin{lemma}
\label{lm:linfty-u}
Let $T>0$. Then,
\begin{equation}
\label{eq:linfty-u}
\norm{\ued}_{L^\infty(I_{T,1})}\le \norm{u_{0}}_{L^\infty(\R)}+ C(T),
\end{equation}
where $I_{T,1}$ is defined in \eqref{eq:defI}.
\end{lemma}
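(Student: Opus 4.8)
The plan is to regard the first equation of \eqref{eq:OHepsw1},
\[
\pt\ued - \pxx\ued + f'(\ued)\px\ued = \gamma\Ped,
\]
as a \emph{linear} parabolic equation for the classical solution $\ued$, in which the smooth function $f'(\ued)$ plays the role of a drift coefficient and $\gamma\Ped$ the role of a forcing term, and then to run a comparison (maximum) principle. The only nontrivial ingredient is that the forcing is already under control: by \eqref{eq:P-infty} of Lemma \ref{lm:P-infty} we have $\abs{\gamma\Ped(t,x)}\le C(T)$ on $I_{T,1}$. Writing $L[\phi]:=\pt\phi-\pxx\phi+f'(\ued)\px\phi$, the equation reads $L[\ued]=\gamma\Ped$, and the bound on the right-hand side is the whole point of having first proved \eqref{eq:P-infty}.

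Next I would build spatially constant barriers. Set
\[
w^{\pm}(t,x):=\pm\big(\norm{u_{0}}_{L^\infty(\R)}+C(T)\,t\big),
\]
with $C(T)$ the constant bounding $\abs{\gamma\Ped}$. Since $w^\pm$ do not depend on $x$, one computes $L[w^+]=C(T)\ge\gamma\Ped=L[\ued]$ and $L[w^-]=-C(T)\le\gamma\Ped=L[\ued]$, so $w^+-\ued$ is a supersolution and $w^--\ued$ a subsolution of the homogeneous operator $L$. On the initial slice $t=0$, \eqref{eq:u0eps} gives $w^+(0,\cdot)-u_{\delta,0}=\norm{u_0}_{L^\infty(\R)}-u_{\delta,0}\ge0$ and, symmetrically, $w^-(0,\cdot)-u_{\delta,0}\le0$.

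The core of the argument is the maximum principle on the unbounded strip $I_{T,1}$, and this is exactly where one must be careful. The remedy is the decay at spatial infinity: the classical solution $\ued(t,\cdot)$ tends to $0$ as $\abs{x}\to\infty$, a property already exploited in this section when passing to the limit $x\to\pm\infty$ (cf. \eqref{eq:P-pxP-intfy1}, \eqref{eq:500}, \eqref{eq:510}). Hence $w^+-\ued\to w^+\ge0$ and $w^--\ued\to w^-\le0$ at infinity, so $w^+-\ued$ attains its minimum, and $w^--\ued$ its maximum, on the parabolic boundary $\{t=0\}\cup\{\abs{x}=\infty\}$, where both have the favourable sign. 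The minimum/maximum principle then yields $w^-\le\ued\le w^+$ throughout $I_{T,1}$, that is,
\[
\abs{\ued(t,x)}\le \norm{u_{0}}_{L^\infty(\R)}+C(T)\,t\le \norm{u_{0}}_{L^\infty(\R)}+C(T),
\qquad (t,x)\in I_{T,1},
\]
after absorbing $C(T)\,T$ into $C(T)$, which is \eqref{eq:linfty-u}. The main obstacle is precisely the justification of the comparison on the whole line; once the decay of $\ued$ at $\abs{x}=\infty$ is invoked, it collapses to the standard parabolic maximum principle, with no circularity since $f'(\ued)$ enters only as a continuous coefficient.
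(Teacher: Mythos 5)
Your proof is correct and follows essentially the same route as the paper: both arguments use the $L^\infty$ bound on $\Ped$ from Lemma \ref{lm:P-infty} to control the forcing term, build the spatially constant barrier $\norm{u_0}_{L^\infty(\R)}+C(T)\,t$ (linear in time, solving the corresponding ODE), and conclude by the parabolic comparison principle on $I_{T,1}$. The only difference is that you explicitly justify the comparison on the unbounded strip via the decay of $\ued$ at spatial infinity, a technical point the paper leaves implicit when it invokes ``the comparison principle for parabolic equations.''
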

\begin{proof}
Due to \eqref{eq:OHepsw1} and \eqref{eq:P-infty},
\begin{equation*}
\pt \ued +\px f(\ued)-\pxx \ued\le \gamma C(T).
\end{equation*}
Since the map
\begin{equation*}
{\mathcal F}(t):=\norm{u_{0}}_{L^\infty(\R)}+\gamma C(T)t,
\end{equation*}
solves the equation
\begin{equation*}
\frac{d{\mathcal F}}{dt}=\gamma C(T)
\end{equation*}
and
\begin{equation*}
\max\{\ued(0,x),0\}\le {\mathcal F}(t),\qquad (t,x)\in I_{T,1},
\end{equation*}
the comparison principle for parabolic equations implies that
\begin{equation*}
 \ued(t,x)\le {\mathcal F}(t),\qquad (t,x)\in I_{T,1}.
\end{equation*}
In a similar way we can prove that
\begin{equation*}
\ued(t,x)\ge -{\mathcal F}(t),\qquad (t,x)\in I_{T,1}.
\end{equation*}
Therefore,
\begin{equation*}
\vert\ued(t,x)\vert\le\norm{u_{0}}_{L^\infty(\R)}+\gamma C(T)t\le\norm{u_{0}}_{L^\infty(\R)}+ C(T),
\end{equation*}
which gives \eqref{eq:linfty-u}.
\end{proof}

\begin{lemma}\label{lm:34}
Let $T>0$ and $0<\delta<1$. We have that
\begin{equation}
\label{eq:012}
\norm{\px\ued(t,\cdot)}^2_{L^2(\R)} + \int_{0}^{t}\norm{\pxx\ued(s,\cdot)}^2_{L^2(\R)}ds\le C(T).
\end{equation}
\end{lemma}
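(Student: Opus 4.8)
The plan is to run the standard parabolic energy estimate one derivative above the $L^2$ bound of Lemma \ref{lm:l2-u}. The natural multiplier for the first equation of \eqref{eq:OHepsw1} is $-\pxx\ued$. Multiplying by $-\pxx\ued$ and integrating over $\R$, the viscous term produces the good dissipation $-\norm{\pxx\ued(t,\cdot)}^2_{L^2(\R)}$, while the time-derivative term becomes, after one integration by parts in $x$ (all boundary contributions at $\pm\infty$ vanishing by the decay of the smooth solution),
\[
-\int_\R\pt\ued\,\pxx\ued\,dx=\frac{1}{2}\frac{d}{dt}\norm{\px\ued(t,\cdot)}^2_{L^2(\R)}.
\]
Writing $\px f(\ued)=f'(\ued)\px\ued$, this yields the identity
\[
\frac{1}{2}\frac{d}{dt}\norm{\px\ued(t,\cdot)}^2_{L^2(\R)}+\norm{\pxx\ued(t,\cdot)}^2_{L^2(\R)}=\int_\R f'(\ued)\px\ued\,\pxx\ued\,dx-\gamma\int_\R\Ped\,\pxx\ued\,dx,
\]
so everything reduces to absorbing the two right-hand terms into $\norm{\pxx\ued}^2_{L^2(\R)}$.

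The convection term is the main point. Here I would invoke the subquadratic bound $\vert f'(\ued)\vert\le C_0\vert\ued\vert$ of \eqref{eq:assflux1} together with the uniform $L^\infty$ estimate \eqref{eq:linfty-u} of Lemma \ref{lm:linfty-u}, giving $\vert f'(\ued)\vert\le C(T)$ on $I_{T,1}$. Cauchy--Schwarz followed by Young's inequality then gives
\[
\left\vert\int_\R f'(\ued)\px\ued\,\pxx\ued\,dx\right\vert\le C(T)\norm{\px\ued(t,\cdot)}_{L^2(\R)}\norm{\pxx\ued(t,\cdot)}_{L^2(\R)}\le\frac{1}{4}\norm{\pxx\ued(t,\cdot)}^2_{L^2(\R)}+C(T)\norm{\px\ued(t,\cdot)}^2_{L^2(\R)}.
\]
The elliptic term is handled the same way, using the bound $\norm{\Ped(t,\cdot)}_{L^2(\R)}\le C(T)$ of \eqref{eq:l2P} in Lemma \ref{lm:P-infty}, so that
\[
\gamma\left\vert\int_\R\Ped\,\pxx\ued\,dx\right\vert\le\frac{1}{4}\norm{\pxx\ued(t,\cdot)}^2_{L^2(\R)}+C(T).
\]

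Combining these two estimates absorbs a factor $\tfrac12\norm{\pxx\ued}^2_{L^2(\R)}$ into the left side and leaves the differential inequality
\[
\frac{d}{dt}\norm{\px\ued(t,\cdot)}^2_{L^2(\R)}+\norm{\pxx\ued(t,\cdot)}^2_{L^2(\R)}\le C(T)\norm{\px\ued(t,\cdot)}^2_{L^2(\R)}+C(T).
\]
Dropping the nonnegative dissipation and applying the Gronwall Lemma, together with the initial bound $\norm{\px u_{\delta,0}}_{L^2(\R)}\le C_0$ from \eqref{eq:u0eps}, yields $\norm{\px\ued(t,\cdot)}^2_{L^2(\R)}\le C(T)$; reinserting this into the integrated form of the inequality then bounds $\int_0^t\norm{\pxx\ued(s,\cdot)}^2_{L^2(\R)}ds$ as well, which is precisely \eqref{eq:012}. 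I expect the only genuinely delicate point to be the justification that the boundary terms at $\pm\infty$ vanish in the integrations by parts, but this follows from the smoothness of $\ued$ and the decay encoded in the a priori $L^2$ bounds already obtained, so no ingredient beyond Lemmas \ref{lm:l2-u}, \ref{lm:P-infty} and \ref{lm:linfty-u} is needed.
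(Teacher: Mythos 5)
Your proof is correct and follows essentially the same route as the paper: multiply the first equation of \eqref{eq:OHepsw1} by $-\pxx\ued$, integrate by parts, and control the convection and nonlocal terms by Young's inequality using the $L^\infty$ bound of Lemma \ref{lm:linfty-u} (hence a bound on $f'(\ued)$) and the $L^2$ bound \eqref{eq:l2P} on $\Ped$. The only, immaterial, difference is the closing step: you invoke the Gronwall Lemma, while the paper bypasses it by bounding $\int_0^t\norm{\px\ued(s,\cdot)}^2_{L^2(\R)}\,ds$ directly via \eqref{eq:pxU2}, which is already available from the dissipation term in Lemma \ref{lm:l2-u}.
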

\begin{proof}
Let $0<t<T$. Multiplying \eqref{eq:OHepsw1} by $-\pxx\ued$, we have
\begin{equation}
\label{eq:12346}
\begin{split}
-\pxx\ued\pt\ued &+(\pxx\ued)^2\\
 =&-\gamma\Ped\pxx\ued-  f'(\ued)\px\ued\pxx\ued.
\end{split}
\end{equation}
Since
\begin{equation*}
-\int_{\R}\pxx\ued\pt\ued dx=\frac{d}{dt}\left(\frac{1}{2}\int_{\R}(\px\ued)^2 \right),
\end{equation*}
integrating \eqref{eq:12346} on $\R$, we get
\begin{align*}
\frac{d}{dt}\left(\int_{\R}(\px\ued)^2 dx\right)&+2\int_{\R}(\pxx\ued)^2 dx\\
=& -2\gamma\int_{\R}\Ped\pxx\ued dx \\
&- 2\int_{\R}f'(\ued)\px\ued\pxx\ued dx.
\end{align*}
Due to \eqref{eq:l2-u}, \eqref{eq:l2P}, \eqref{eq:linfty-u} and the Young inequality,
\begin{align*}
&-2\gamma\int_{\R}\Ped\pxx\ued dx\\
&\qquad \le 2\gamma\left\vert\int_{\R}\Ped\pxx\ued dx\right\vert\\
&\qquad \le 2\int_{\R}\left\vert\sqrt{2}\gamma\Ped\right\vert\left\vert\frac{\pxx\ued}{\sqrt{2}}\right\vert dx\\
&\qquad \le 2\gamma^2\norm{\Ped(t,\cdot)}^2_{L^2(\R)} + \frac{1}{2} \norm{\pxx\ued(t,\cdot)}^2_{L^2(\R)}\\
&\qquad \le C(T)+ \frac{1}{2}  \norm{\pxx\ued(t,\cdot)}^2_{L^2(\R)},\\
&-2\int_{\R}f'(\ued)\px\ued\pxx\ued dx\\
&\qquad \le 2\left\vert\int_{\R}f'(\ued)\px\ued\pxx\ued dx\right\vert\\
&\qquad \le 2\int_{\R}\left\vert\sqrt{2}f'(\ued)\px\ued\right\vert \left\vert\frac{  \pxx\ued}{\sqrt{2}}\right \vert dx\\
&\qquad \le 2\int_{\R}(f'(\ued))^2(\px\ued^2)+ \frac{1}{2}\int_{\R}(\pxx\ued)^2 dx\\
&\qquad \le 2\norm{f'}^2_{L^{\infty}(I_{T,2})}\norm{ \px\ued(t,\cdot)}^2_{L^2(\R)}  +  \frac{1}{2}\norm{\pxx\ued(t,\cdot)}^2_{L^2(\R)},
\end{align*}
where
\begin{equation}
\label{eq:def-I2}
I_{T,2}=\left(-\norm{u_{0}}_{L^\infty(\R)}-C(T), \norm{u_{0}}_{L^\infty(\R)}+C(T)\right).
\end{equation}
Therefore,
\begin{align*}
\frac{d}{dt}\left(\norm{\px\ued(t,\cdot)}^2_{L^2(\R)} \right)&+2\norm{(\pxx\ued(t,\cdot))}^2_{L^2(\R)}\\
\le &   \norm{\pxx\ued(t,\cdot)}^2_{L^2(\R)}+ \norm{f'}^2_{L^{\infty}(I_{T,2})}\norm{ \px\ued(t,\cdot)}^2_{L^2(\R)} + C(T),  \\
\end{align*}
that is
\begin{align*}
\frac{d}{dt}\left(\norm{\px\ued(t,\cdot)}^2_{L^2(\R)} \right)&+ \norm{\pxx\ued(t,\cdot)}^2_{L^2(\R)}\\
\le &  \norm{f'}^2_{L^{\infty}(I_{T,2})}\norm{ \px\ued(t,\cdot)}^2_{L^2(\R)}+ C(T).
\end{align*}
An integration on $(0,t)$ and \eqref{eq:u0eps} give
\begin{equation}
\label{eq:001}
\begin{split}
\norm{\px\ued(t,\cdot)}^2_{L^2(\R)} &+ \int_{0}^{t}\norm{\pxx\ued(s,\cdot)}^2_{L^2(\R)}ds\\
\le & 2\norm{f'}^2_{L^{\infty}(I_{T,2})}\int_{0}^{t}\norm{ \px\ued(s,\cdot)}^2_{L^2(\R)}ds +C(T).
\end{split}
\end{equation}
\eqref{eq:012} follows from \eqref{eq:pxU2} and \eqref{eq:001}.
\end{proof}

\begin{lemma}\label{lm:37}
Let $T>0$ and $0<\delta<1$. We have that
\begin{equation}
\label{eq:052}
\norm{\px\ued}_{L^{\infty}(I_{T,1})} \le C(T),
\end{equation}
where $I_{T,1}$ is defined in \eqref{eq:defI}. Moreover,
\begin{equation}
\label{eq:055}
\norm{\pxx\ued(t,\cdot)}^2_{L^2(\R)}+\int_{0}^{t}\norm{\pxxx\ued(s,\cdot)}^2_{L^2(\R)}ds\le C(T).
\end{equation}

\end{lemma}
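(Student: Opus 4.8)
The plan is to establish the higher estimate \eqref{eq:055} first by an $H^2$ energy argument on \eqref{eq:OHepsw1}, and then to deduce the $L^\infty$ bound \eqref{eq:052} as an immediate consequence by one-dimensional interpolation; this reverses the order in which the two statements are displayed, but it is the natural logical order, since \eqref{eq:052} follows from \eqref{eq:055} and the already proven \eqref{eq:012}. Throughout I keep $0<t<T$ fixed and write $C(T)$ for constants depending on the data and on $T$. I freely use that $\ued$ is bounded in $L^\infty(I_{T,1})$ by \eqref{eq:linfty-u}, so that $f'(\ued)$ and $f''(\ued)$ are controlled by $\norm{f'}_{L^\infty(I_{T,2})}$ and $\norm{f''}_{L^\infty(I_{T,2})}$ on the interval $I_{T,2}$ of \eqref{eq:def-I2}, both finite because $f\in C^2(\R)$ by \eqref{eq:assflux1}.

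For the energy identity I multiply the first equation of \eqref{eq:OHepsw1} by $\pxxxx\ued$ and integrate over $\R$. Integrating by parts (all boundary terms vanish by the decay of $\ued$ and its derivatives), the time term yields $\frac12\frac{d}{dt}\norm{\pxx\ued(t,\cdot)}^2_{L^2(\R)}$, the dissipation term $\pxx\ued$ yields $-\norm{\pxxx\ued(t,\cdot)}^2_{L^2(\R)}$, and moving one derivative off the remaining terms gives
\begin{equation*}
\frac12\frac{d}{dt}\norm{\pxx\ued(t,\cdot)}^2_{L^2(\R)}+\norm{\pxxx\ued(t,\cdot)}^2_{L^2(\R)}=-\gamma\int_{\R}\pxxx\ued\,\px\Ped\,dx+\int_{\R}\pxxx\ued\,\pxx(f(\ued))\,dx,
\end{equation*}
where $\pxx(f(\ued))=f''(\ued)(\px\ued)^2+f'(\ued)\pxx\ued$.

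Now I estimate the right-hand side, the delicate contribution being the cubic term $\int_{\R}\pxxx\ued\,f''(\ued)(\px\ued)^2\,dx$. The first term is handled by Young's inequality and \eqref{eq:h2-P}, giving $\frac16\norm{\pxxx\ued(t,\cdot)}^2_{L^2(\R)}+C(T)$; the term $\int_{\R}\pxxx\ued\,f'(\ued)\pxx\ued\,dx$ is bounded by $\frac16\norm{\pxxx\ued(t,\cdot)}^2_{L^2(\R)}+C(T)\norm{\pxx\ued(t,\cdot)}^2_{L^2(\R)}$. For the cubic term I avoid any circular use of \eqref{eq:052} by controlling $\norm{\px\ued}_{L^\infty(\R)}$ through the elementary one-dimensional inequality $\norm{\px\ued(t,\cdot)}^2_{L^\infty(\R)}\le 2\norm{\px\ued(t,\cdot)}_{L^2(\R)}\norm{\pxx\ued(t,\cdot)}_{L^2(\R)}$ (the same device used for $\Ped$ in Lemma \ref{lm:P-infty}) together with the bound $\norm{\px\ued(t,\cdot)}_{L^2(\R)}\le C(T)$ from \eqref{eq:012}; this gives
\begin{equation*}
\left|\int_{\R}\pxxx\ued\,f''(\ued)(\px\ued)^2\,dx\right|\le C(T)\norm{\pxx\ued(t,\cdot)}^{1/2}_{L^2(\R)}\norm{\pxxx\ued(t,\cdot)}_{L^2(\R)}\le\frac16\norm{\pxxx\ued(t,\cdot)}^2_{L^2(\R)}+C(T)\bigl(1+\norm{\pxx\ued(t,\cdot)}^2_{L^2(\R)}\bigr).
\end{equation*}
Collecting the three estimates and absorbing $\frac12\norm{\pxxx\ued}^2_{L^2(\R)}$ into the left-hand side yields $\frac{d}{dt}\norm{\pxx\ued(t,\cdot)}^2_{L^2(\R)}+\norm{\pxxx\ued(t,\cdot)}^2_{L^2(\R)}\le C(T)\bigl(1+\norm{\pxx\ued(t,\cdot)}^2_{L^2(\R)}\bigr)$.

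Finally, the Gronwall Lemma together with $\norm{\pxx u_{\delta,0}}_{L^2(\R)}\le C_0$ from \eqref{eq:u0eps} bounds $\norm{\pxx\ued(t,\cdot)}^2_{L^2(\R)}$ by $C(T)$; integrating the differential inequality on $(0,t)$ and using the time-integrated bound $\int_0^t\norm{\pxx\ued(s,\cdot)}^2_{L^2(\R)}\,ds\le C(T)$ from \eqref{eq:012} then controls $\int_0^t\norm{\pxxx\ued(s,\cdot)}^2_{L^2(\R)}\,ds$, which is \eqref{eq:055}. The bound \eqref{eq:052} follows at once: by the same interpolation inequality, $\norm{\px\ued(t,\cdot)}^2_{L^\infty(\R)}\le 2\norm{\px\ued(t,\cdot)}_{L^2(\R)}\norm{\pxx\ued(t,\cdot)}_{L^2(\R)}\le C(T)$, uniformly in $t$ and in $\delta$. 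The main obstacle is precisely the cubic term, and the device that makes the scheme work is using \eqref{eq:012} to keep $\norm{\px\ued}_{L^2(\R)}$ bounded, so that the $L^\infty$ factor of $\px\ued$ costs only a sublinear power of $\norm{\pxx\ued}_{L^2(\R)}$, which Gronwall can absorb.
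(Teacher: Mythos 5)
Your proof is correct, and while it starts from exactly the paper's energy identity (multiplying the first equation of \eqref{eq:OHepsw1} by $\pxxxx\ued$, producing the same three right-hand side terms), it closes the estimate by a genuinely different device. The paper treats the space--time sup-norm $\norm{\px\ued}_{L^{\infty}(I_{T,1})}$ as an unknown: it bounds the cubic term by $C(T)\norm{f''}^2_{L^{\infty}(I_{T,2})}\norm{\px\ued}^2_{L^{\infty}(I_{T,1})}$ (using \eqref{eq:012} for the factor $\norm{\px\ued(t,\cdot)}^2_{L^2(\R)}$), integrates in time to reach the conditional estimate \eqref{eq:045}, namely $\norm{\pxx\ued(t,\cdot)}^2_{L^2(\R)}+\int_0^t\norm{\pxxx\ued(s,\cdot)}^2_{L^2(\R)}ds\le C(T)\bigl(1+\norm{\px\ued}^2_{L^{\infty}(I_{T,1})}\bigr)$, and only then applies the interpolation inequality $(\px\ued)^2\le 2\norm{\px\ued}_{L^2(\R)}\norm{\pxx\ued}_{L^2(\R)}$ to obtain the polynomial inequality $\norm{\px\ued}^4_{L^{\infty}(I_{T,1})}-C(T)\norm{\px\ued}^2_{L^{\infty}(I_{T,1})}-C(T)\le 0$, which yields \eqref{eq:052} first and then \eqref{eq:055} as a consequence. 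You instead push the interpolation inequality \emph{inside} the cubic term, so that by \eqref{eq:012} it costs only $C(T)\norm{\pxx\ued(t,\cdot)}^{1/2}_{L^2(\R)}\norm{\pxxx\ued(t,\cdot)}_{L^2(\R)}$; Young's inequality then leaves a right-hand side linear in $1+\norm{\pxx\ued(t,\cdot)}^2_{L^2(\R)}$, which Gronwall absorbs with no bootstrap, giving \eqref{eq:055} unconditionally and \eqref{eq:052} as a one-line corollary. Both routes use the same ingredients (\eqref{eq:012}, \eqref{eq:h2-P}, the $L^\infty$ bound of Lemma \ref{lm:linfty-u}, and the one-dimensional interpolation inequality); yours buys a cleaner logical structure, free of the self-referential sup-norm unknown and the quartic inequality, at the price of slightly more careful bookkeeping of fractional powers. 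A small remark: in your final step Gronwall is not even needed, since integrating your differential inequality on $(0,t)$ and invoking $\int_0^t\norm{\pxx\ued(s,\cdot)}^2_{L^2(\R)}ds\le C(T)$ from \eqref{eq:012} (exactly as the paper does) already produces both bounds simultaneously.
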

\begin{proof}
Let $0<t<T$. Multiplying \eqref{eq:OHepsw1} by $\pxxxx\ued$, we have
\begin{equation}
\label{eq:018}
\begin{split}
\pxxxx\ued\pt\ued &- \pxxxx\ued\pxx\ued\\
=&\gamma\Ped\pxxxx\ued- f'(\ued)\px\ued\pxxxx\ued.
\end{split}
\end{equation}
Since
\begin{align*}
\int_{\R}\pxxxx\ued\pt\ued dx =&\frac{d}{dt}\left(\frac{1}{2}\int_{\R}(\pxx\ued)^2 dx\right), \\
- \int_{\R}\pxxxx\ued\pxx\ued dx =& \int_{\R}(\pxxx\ued)^2 dx,\\
\gamma\int_{\R}\Ped\pxxxx\ued dx = &-\gamma\int_{\R}\px\Ped\pxxx\ued dx,\\
-\int_{\R}f'(\ued)\px\ued\pxxxx\ued dx=&\int_{\R}f''(\ued)(\px\ued)^2\pxxx\ued dx\\
& + \int_{\R}f'(\ued)\pxx\ued\pxxx\ued dx,
\end{align*}
integrating \eqref{eq:12346} on $\R$, we get
\begin{align*}
\frac{d}{dt}\left(\int_{\R}(\pxx\ued)^2 dx\right)&+2\int_{\R}(\pxxx\ued)^2 dx\\
=& -2\gamma\int_{\R}\px\Ped\pxxx\ued dx\\
&+2\int_{\R}f''(\ued)(\px\ued)^2\pxxx\ued dx\\
&+ 2\int_{\R}f'(\ued)\pxx\ued\pxxx\ued dx.
\end{align*}
Due to \eqref{eq:h2-P}, \eqref{eq:linfty-u}, \eqref{eq:012} and the Young inequality,
\begin{align*}
&-2\gamma\int_{\R}\px\Ped\pxxx\ued dx\\
&\qquad \le 2 \gamma\left\vert \int_{\R}\px\Ped\pxxx\ued dx \right\vert\\
&\qquad \le 2\int_{\R}\left\vert\sqrt{3}\gamma\px\Ped\right\vert\left\vert\frac{\pxxx\ued}{\sqrt{3}}\right\vert dx\\
&\qquad \le 3\gamma^2\norm{\px\Ped(t.\cdot)}^2_{L^{2}(\R)}+ \frac{1}{3}\norm{\pxxx\ued(t.\cdot)}^2_{L^{2}(\R)}\\
&\qquad \le C(T) + \frac{1}{3}\norm{\pxxx\ued(t.\cdot)}^2_{L^{2}(\R)},\\
&2\int_{\R}f''(\ued)(\px\ued)^2\pxxx\ued dx\\
&\qquad \le 2\left\vert\int_{\R}f''(\ued)(\px\ued)^2\pxxx\ued dx\right\vert\\
&\qquad \le 2\int_{\R}\left\vert\sqrt{3} f''(\ued)(\px\ued)^2\right\vert\left\vert\frac{\pxxx\ued}{\sqrt{3}}\right \vert dx\\
&\qquad \le 3\int_{\R}(f''(\ued))^2(\px\ued)^4 dx + \frac{1}{3}\norm{\pxxx\ued(t,\cdot)}^2_{L^2(\R)}\\
&\qquad \le 3\norm{f''}^2_{L^{\infty}(I_{T,2})}\norm{\px\ued}^2_{L^{\infty}(I_{T,1})}\norm{\px\ued(t,\cdot)}^2_{L^2(\R)}+ \frac{1}{3} \norm{\pxxx\ued(t,\cdot)}^2_{L^2(\R)}\\
&\qquad \le 3\norm{f''}^2_{L^{\infty}(I_{T,2})}C(T)\norm{\px\ued}^2_{L^{\infty}(I_{T,1})}\\
&\qquad\quad+ \frac{1}{3} \norm{\pxxx\ued(t,\cdot)}^2_{L^2(\R)},\\
& 2\int_{\R}f'(\ued)\pxx\ued\pxxx\ued dx\\
&\qquad \le 2\left\vert\int_{\R}f'(\ued)\pxx\ued\pxxx\ued dx\right\vert\\
&\qquad \le 2\int_{\R} \left\vert\sqrt{3}f'(\ued)\pxx\ued\right\vert\left \vert \frac{\pxxx\ued}{\sqrt{3}}\right \vert dx\\
&\qquad \le 3\int_{\R}(f'(\ued))^2(\pxx\ued)^2 dx + \frac{1}{3} \norm{\pxxx\ued(t,\cdot)}^2_{L^2(\R)}\\
&\qquad \le 3\norm{f'}^2_{L^{\infty}(I_{T,2})}\norm{\pxx\ued(t,\cdot)}^2_{L^2(\R)}+ \frac{1}{3}\norm{\pxxx\ued(t,\cdot)}^2_{L^2(\R)},
\end{align*}
where $I_{T,1}$ is defined in \eqref{eq:defI} and $I_{T,2}$ is defined in \eqref{eq:def-I2}. Therefore,
\begin{align*}
\frac{d}{dt}\left(\norm{\pxx\ued(t,\cdot)}^2_{L^2(\R)}\right)&+2\norm{\pxxx\ued(t,\cdot)}^2_{L^2(\R)}\\
\le &\norm{\pxxx\ued(t.\cdot)}^2_{L^{2}(\R)}\\
&+3\norm{f''}^2_{L^{\infty}(I_{T,2})}C(T)\norm{\px\ued}^2_{L^{\infty}(I_{T,1})}\\
&+ 3\norm{f'}^2_{L^{\infty}(I_{T,2})}\norm{\pxx\ued(t,\cdot)}^2_{L^2(\R)} +C(T),
\end{align*}
that is
\begin{align*}
\frac{d}{dt}\left(\norm{\pxx\ued(t,\cdot)}^2_{L^2(\R)}\right)&+\norm{\pxxx\ued(t,\cdot)}^2_{L^2(\R)}\\
\le & C(T)\norm{\px\ued}^2_{L^{\infty}(I_{T,1})} +C(T)\\
& + C(T)\norm{\pxx\ued(t,\cdot)}^2_{L^2(\R)}.
\end{align*}
An integration on $(0,t)$, \eqref{eq:u0eps} and \eqref{eq:012} give
\begin{align*}
\norm{\pxx\ued(t,\cdot)}^2_{L^2(\R)}&+\int_{0}^{t}\norm{\pxxx\ued(s,\cdot)}^2_{L^2(\R)}ds\\
\le &\left(C(T)\norm{\px\ued}^2_{L^{\infty}(I_{T,1})} +C(T)\right)\int_{0}^{t}ds\\
& + C(T)\int_{0}^{t}\norm{\pxx\ued(s,\cdot)}^2_{L^2(\R)}ds\\
\le & C(T)\norm{\px\ued}^2_{L^{\infty}(I_{T,1})} +C(T).
\end{align*}
Thus,
\begin{equation}
\label{eq:045}
\begin{split}
\norm{\pxx\ued(t,\cdot)}^2_{L^2(\R)}&+\int_{0}^{t}\norm{\pxxx\ued(s,\cdot)}^2_{L^2(\R)}ds\\
 \le& C(T)\left(1+\norm{\px\ued}^2_{L^{\infty}(I_{T,1})}\right).
\end{split}
\end{equation}
Due to \eqref{eq:012}, \eqref{eq:045} and the H\"older inequality,
\begin{align*}
(\px\ued(t,x))^2\le& 2\int_{\R}\vert\px\ued\vert\vert\pxx\ued\vert dx\\
\le &  2\norm{\px\ued(t,\cdot)}_{L^2(\R)} \norm{\pxx\ued(t,\cdot)}_{L^2(\R)}\\
\le & C(T)\sqrt{\left(1+\norm{\px\ued}^2_{L^{\infty}(I_{T,1})}\right)}.
\end{align*}
Then,
\begin{equation*}
\norm{\px\ued}^4_{L^{\infty}(I_{T,1})} -C(T)\norm{\px\ued}^2_{L^{\infty}(I_{T,1})} -C(T) \le 0,
\end{equation*}
which gives \eqref{eq:052}. \\
\eqref{eq:055} follows from \eqref{eq:052} and \eqref{eq:045}.
\end{proof}
Arguing as in \cite{CHK:ParEll}, we obtain the following result
\begin{lemma}\label{lm:38}
Let $T>0$, $\ell >2$  and $0<\delta <1$. For each $t\in (0,T)$,
\begin{equation}
\px^{\ell}\ued(t,\cdot)\in L^2(\R).
\end{equation}
\end{lemma}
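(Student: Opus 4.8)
The plan is to establish the regularity $\px^{\ell}\ued(t,\cdot)\in L^2(\R)$ for all $\ell>2$ by induction on $\ell$, bootstrapping the energy estimates already obtained in Lemmas \ref{lm:34} and \ref{lm:37}. Those two lemmas handle the base cases $\ell=1,2$ (and, via \eqref{eq:055}, the $L^2$-in-time control of the third derivative). The inductive step runs exactly as in the proofs of Lemmas \ref{lm:34} and \ref{lm:37}: assuming control of $\norm{\px^{k}\ued(t,\cdot)}_{L^2(\R)}$ and of $\int_0^t\norm{\px^{k+1}\ued(s,\cdot)}^2_{L^2(\R)}ds$ for all $k\le \ell-1$, one multiplies \eqref{eq:OHepsw1} by $(-1)^{\ell}\px^{2\ell}\ued$ and integrates over $\R$.

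First I would carry out the integration by parts on the principal terms. The time-derivative term yields $\frac{1}{2}\frac{d}{dt}\norm{\px^{\ell}\ued(t,\cdot)}^2_{L^2(\R)}$, and the dissipation term $\pxx\ued$ produces the good term $\norm{\px^{\ell+1}\ued(t,\cdot)}^2_{L^2(\R)}$ with the correct sign. Next I would treat the source term $\gamma\Ped$: after integrating by parts it becomes $\pm\gamma\int_{\R}\px^{\ell-1}\Ped\,\px^{\ell+1}\ued\,dx$, and since $\px^{\ell-1}\Ped=\px^{\ell-2}\ued$ via the elliptic equation (whenever $\ell\ge 2$, differentiating $\px\Ped=$ the relation implied by $-\delta\pxx\Ped+\px\Ped=\ued$), this is controlled by the inductive hypothesis together with a Young inequality absorbing a fraction of $\norm{\px^{\ell+1}\ued}^2_{L^2(\R)}$. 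The genuinely nonlinear flux term $\px f(\ued)=f'(\ued)\px\ued$ expands, after integration by parts, into a sum of terms of the form $\int_{\R} (\text{product of } f^{(j)}(\ued) \text{ and lower-order derivatives of }\ued)\,\px^{\ell+1}\ued\,dx$; each such product is bounded in $L^2(\R)$ using the $L^\infty$ bounds \eqref{eq:linfty-u}, \eqref{eq:052} on $\ued$ and $\px\ued$, the smoothness of $f$ from \eqref{eq:assflux1}, and the inductive $L^2$ bounds on intermediate derivatives, again with a Young inequality to absorb $\px^{\ell+1}\ued$.

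Collecting these estimates gives a differential inequality of the form $\frac{d}{dt}\norm{\px^{\ell}\ued(t,\cdot)}^2_{L^2(\R)}+\norm{\px^{\ell+1}\ued(t,\cdot)}^2_{L^2(\R)}\le C(T)\bigl(1+\norm{\px^{\ell}\ued(t,\cdot)}^2_{L^2(\R)}\bigr)$, whence Gronwall's lemma and the approximation bounds \eqref{eq:u0eps} yield both $\norm{\px^{\ell}\ued(t,\cdot)}_{L^2(\R)}\le C(T)$ and $\int_0^t\norm{\px^{\ell+1}\ued(s,\cdot)}^2_{L^2(\R)}ds\le C(T)$, which closes the induction and delivers $\px^{\ell}\ued(t,\cdot)\in L^2(\R)$.

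The main obstacle is bookkeeping the nonlinear flux term: when $\px f(\ued)$ is differentiated $\ell$ times and integrated by parts, the Faà di Bruno expansion produces many products of the form $f^{(j)}(\ued)\prod_i \px^{m_i}\ued$ with $\sum_i m_i=\ell$, and one must check that every such product lies in $L^2(\R)$ using only the already-controlled quantities. The mechanism is standard: at most one factor carries a high-order derivative (estimated in $L^2$), while all remaining factors are low-order and estimated in $L^\infty$ via the established bounds \eqref{eq:linfty-u} and \eqref{eq:052} together with interpolation. Since the structure is identical to the calculation already performed in Lemma \ref{lm:37}, and since, as the authors note, the argument follows \cite{CHK:ParEll}, I would only indicate this pattern rather than write out every term explicitly.
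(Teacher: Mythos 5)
Your overall strategy --- an inductive bootstrap of higher-order energy estimates, modeled on Lemmas \ref{lm:34} and \ref{lm:37} --- is exactly what the paper intends: the paper gives no proof of Lemma \ref{lm:38} at all, merely deferring to \cite{CHK:ParEll}, where precisely this kind of iterated $H^{\ell}$ energy estimate is carried out. Your treatment of the flux term (Fa\`a di Bruno expansion, at most one high-order factor estimated in $L^2$, the remaining low-order factors in $L^{\infty}$ by the inductive hypothesis, \eqref{eq:linfty-u}, \eqref{eq:052} and interpolation, then Young plus Gronwall) is sound and closes the induction.

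However, one step is wrong as written: you assert that $\px^{\ell-1}\Ped=\px^{\ell-2}\ued$ ``via the elliptic equation.'' That identity holds for the limit system $\px P=u$, but not for the approximation, whose elliptic equation is $-\delta\pxx\Ped+\px\Ped=\ued$; differentiating it gives
\begin{equation*}
\px^{\ell-1}\Ped=\px^{\ell-2}\ued+\delta\,\px^{\ell}\Ped,
\end{equation*}
so the contribution $\delta\,\px^{\ell}\Ped$ cannot simply be discarded. The repair is the device already used in Lemma \ref{lm:cns1}: differentiate the elliptic equation $k-1$ times, square, and integrate over $\R$; the cross term $-2\delta\int_{\R}\px^{k+1}\Ped\,\px^{k}\Ped\,dx$ is a total derivative and vanishes (after checking, as in Lemma \ref{lm:cns1}, that the relevant derivatives of $\Ped$ decay at infinity), which yields
\begin{equation*}
\delta^{2}\norm{\px^{k+1}\Ped(t,\cdot)}^{2}_{L^{2}(\R)}+\norm{\px^{k}\Ped(t,\cdot)}^{2}_{L^{2}(\R)}=\norm{\px^{k-1}\ued(t,\cdot)}^{2}_{L^{2}(\R)}.
\end{equation*}
In particular $\norm{\px^{\ell-1}\Ped(t,\cdot)}_{L^{2}(\R)}\le\norm{\px^{\ell-2}\ued(t,\cdot)}_{L^{2}(\R)}$, which is the bound your Young-inequality step for the source term actually requires, and it is available from the inductive hypothesis. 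With this correction (and uniformly in $0<\delta<1$), your argument goes through as described.
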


\section{Proof of Theorem \ref{th:wellp}}\label{sec:w12}
This section is devoted to the proof of Theorem \ref{th:wellp}.

We begin by proving the following result
\begin{lemma}\label{lm:exist}
Let $T>0$. Assume  \eqref{eq:assinit}, \eqref{eq:def-di-P01}, \eqref{eq:L-2P01} and \eqref{eq:assflux1}. Then there exist
\begin{align}
\label{eq:uePe1}
u&\in L^{\infty}((0,T)\times\R)\cap C((0,T);H^\ell(\R)),\quad \ell>2,\\
\label{eq:uePe2}
P&\in L^{\infty}((0,T)\times\R)\cap L^{2}((0,T)\times\R),
\end{align}
where $u$ is a classical solution of the Cauchy problem of \eqref{eq:OHw}.
\end{lemma}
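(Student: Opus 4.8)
The plan is to pass to the limit $\delta\to 0$ in the parabolic--elliptic approximation \eqref{eq:OHepsw1}, extracting a convergent subsequence by means of the a priori bounds of Lemmas \ref{lm:cns1}--\ref{lm:38}, all of whose constants are independent of $\delta$. First I would collect the uniform estimates into statements about fixed function spaces: by \eqref{eq:l2-u}, \eqref{eq:linfty-u}, \eqref{eq:012} and \eqref{eq:055} the family $\{\ued\}$ is bounded in $L^\infty(I_{T,1})$ and in $L^\infty((0,T);H^2(\R))\cap L^2((0,T);H^3(\R))$, while by \eqref{eq:P-infty}, \eqref{eq:l2P} and \eqref{eq:h2-P} the family $\{\Ped\}$ is bounded in $L^\infty(I_{T,1})\cap L^\infty((0,T);L^2(\R))$, with $\delta\pxx\Ped$ bounded in $L^\infty((0,T);L^2(\R))$. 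Consequently there is a sequence $\dk\to 0$ and limits $u,P$ with $\uedk\weakstar u$ and $\Pedk\weakstar P$ in the respective weak-$\star$ topologies; this already yields the target regularity classes \eqref{eq:uePe1}--\eqref{eq:uePe2}, except for the time continuity of $u$, which I postpone.

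The decisive step is to upgrade this to strong convergence of $\uedk$, so that the nonlinearity passes to the limit. Writing $\pt\ued=\pxx\ued-f'(\ued)\px\ued+\gamma\Ped$ and using $f\in C^2(\R)$ together with the $L^\infty$ bound \eqref{eq:linfty-u} on $\ued$ and the bounds \eqref{eq:012}, \eqref{eq:055}, \eqref{eq:l2P}, I would show that $\{\pt\ued\}$ is bounded in $L^2((0,T);L^2(\R))$. Then an Aubin--Lions--Simon argument, using that $\{\ued\}$ is bounded in $L^2((0,T);H^3(\R))$ with $\{\pt\ued\}$ bounded in $L^2((0,T);L^2(\R))$ and the compact embedding $H^3(K)\hookrightarrow\hookrightarrow H^2(K)$ on every compact $K\subset\R$, gives $\uedk\to u$ strongly in $L^2((0,T);H^2_{\loc}(\R))$ and almost everywhere on $I_{T,1}$. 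In particular $f(\uedk)\to f(u)$ and $\px f(\uedk)\to\px f(u)$ in $\Dp(I_{T,1})$.

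With strong convergence in hand I would pass to the limit term by term. In the first equation of \eqref{eq:OHepsw1} the linear terms converge in $\Dp$ ($\pt\uedk\wto\pt u$, $\pxx\uedk\wto\pxx u$, $\gamma\Pedk\weakstar\gamma P$) and the flux converges by the previous step, so $u$ solves $\pt u+\px f(u)=\gamma P+\pxx u$ in $\Dp(I_{T,1})$. For the elliptic relation I would test $-\dk\pxx\Pedk+\px\Pedk=\uedk$ against $\vfi\in C_c^\infty$ and integrate by parts twice in the first term: since $-\dk\int\!\!\int\Pedk\,\pxx\vfi$ is controlled by $\dk\norm{\Pedk}_{L^2(\R)}\to 0$ via \eqref{eq:l2P}, the elliptic term disappears and $\px P=u$ in $\Dp$, with $P(t,-\infty)=0$ and $\int_\R P(t,\cdot)\,dx=0$ inherited from \eqref{eq:Pmedianulla}. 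The initial datum $u(0,\cdot)=u_0$ follows from $u_{\delta,0}\to u_0$ in \eqref{eq:u0eps} together with the uniform bound on $\pt\ued$, which gives equicontinuity of $t\mapsto\uedk(t,\cdot)$ in a negative Sobolev norm. Finally, the time continuity $u\in C((0,T);H^\ell(\R))$ for every $\ell$ is obtained by passing the higher-order estimates of Lemma \ref{lm:38} to the limit and invoking interior parabolic smoothing for $\pt u-\pxx u=\gamma P-\px f(u)$, which makes $u$ a classical solution.

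The main obstacle I expect is precisely the compactness of $\{\ued\}$ on the unbounded line: Aubin--Lions delivers only local-in-$x$ strong convergence because $H^3(\R)\hookrightarrow H^2(\R)$ is not compact, so one must either localize and then use the uniform $L^2$-in-space control furnished by \eqref{eq:l2-u} and \eqref{eq:012} to rule out mass escaping to infinity, or, as indicated in the introduction, replace this step by the compensated-compactness/$\Hneg$ framework of Tartar to secure the almost-everywhere convergence of $\uedk$ needed for $f(\uedk)\to f(u)$. A secondary subtlety is that the vanishing of $\dk\pxx\Pedk$ must be read off the uniform bound on $\Pedk$ in $L^2$ rather than on $\pxx\Pedk$, since the latter only satisfies $\dk\norm{\pxx\Pedk}_{L^2(\R)}\le C(T)$ by \eqref{eq:equ-L2-stima1}.
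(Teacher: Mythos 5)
Your proposal is correct in its essentials, but it follows a genuinely different route from the paper. The paper does not use Aubin--Lions at all: it runs a compensated-compactness argument. It multiplies the first equation of \eqref{eq:OHepsw1} by $\eta'(\ued)$ for a convex entropy $\eta$ with flux $q'=f'\eta'$, splits the entropy production into three distributions, shows the first is compact in $H^{-1}$ (using the uniform bounds of Lemmas \ref{lm:l2-u}, \ref{lm:34}, \ref{lm:37}) and the other two are bounded in $L^1$ resp.\ $L^1_{\loc}$, then invokes Murat's Lemma and Tartar's theorem --- this is where the genuine-nonlinearity hypothesis $|\{f''=0\}|=0$ in \eqref{eq:assflux1} is actually consumed --- to get $\uedk\to u$ a.e.\ and in $L^p_{\loc}$. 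From there the two arguments reconverge: the limit regularity \eqref{eq:uePe1} comes from weak lower semicontinuity of the uniform estimates (as in your first paragraph), \eqref{eq:uePe2} from Lemma \ref{lm:P-infty}, and the identification $\px P=u$, $P(t,-\infty)=0$ is done in the paper by integrating the elliptic equation to $\int_{-\infty}^{x}\uedk\,dy=\Pedk-\dk\px\Pedk$ and killing the last term via $\dk\norm{\px\Pedk}_{L^\infty}\le\sqrt{\dk}\,e^{\gamma t}\norm{u_0}_{L^2}\to 0$ from \eqref{eq:h2-P}, which is the same mechanism as your weak-formulation argument, just read off a different estimate. Your Aubin--Lions--Simon route is legitimate here precisely because the paper's Lemmas \ref{lm:34} and \ref{lm:37} supply uniform $L^\infty_t H^2_x$ and $L^2_t H^3_x$ bounds and hence (with \eqref{eq:l2P} and \eqref{eq:assflux1}) an $L^2_t L^2_x$ bound on $\pt\ued$; it is more elementary, avoids entropy pairs and Young measures, and does not need genuine nonlinearity of $f$ for the compactness step. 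What the paper's method buys instead is robustness: it only requires the $L^\infty$ bound plus $H^{-1}$-compactness of the entropy production, so it survives in regimes (e.g.\ the inviscid Ostrovsky--Hunter limit treated in the authors' earlier work) where no uniform $H^2$ estimate exists. One remark on your stated obstacle: the lack of compactness of $H^3(\R)\hookrightarrow H^2(\R)$ is not actually a problem to circumvent, since local strong convergence plus a.e.\ convergence and the global uniform bounds are all that is needed both to pass the nonlinearity to the limit and to inherit the regularity classes; the paper's compensated-compactness conclusion is likewise only $L^p_{\loc}$.
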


\begin{proof}
Let $\eta:\R\to\R$ be any convex $C^2$ entropy function, and
$q:\R\to\R$ be the corresponding entropy
flux defined by $q'=f'\eta'$.
By multiplying the first equation in \eqref{eq:OHepsw1} with
$\eta'(u)$ and using the chain rule, we get
\begin{equation*}
    \pt  \eta(\ued)+\px q(\ued)
    =\underbrace{ \pxx \eta(\ued)}_{=:\CL_{1,\delta}}
    \, \underbrace{- \eta''(\ued)\left(\px  \ued\right)^2}_{=: \CL_{2,\delta}}
     \, \underbrace{+\gamma\eta'(\ued) \Ped}_{=: \CL_{3,\delta}},
\end{equation*}
where  $\CL_{1,\delta}$, $\CL_{2,\delta}$, $\CL_{3,\delta}$ are distributions.

Let us show that
\begin{equation}
\label{eq:compct-H-1}
\textrm{$\{\CL_{1,\delta}\}_\delta$ is compact in $H^{-1}((0,T)\times\R)$, $T>0$}.
\end{equation}
Since
\begin{equation*}
\pxx\eta(\ued)=\px(\eta'(\ued)\px\ued),
\end{equation*}
we have to prove that
\begin{align}
\label{eq:eta12}
&\textrm{$\{\eta'(\ued)\px\ued\}_\delta$ is bounded in $L^2((0,T)\times\R)$, $T>0$},\\
\label{eq:eta13}
&\textrm{$\{\eta''(\ued)(\px\ued)^2+ \eta'(\ued)\pxx\ued\}_\delta$ is bounded in $L^2((0,T)\times\R)$, $T>0$}.
\end{align}
We begin by proving that \eqref{eq:eta12} holds.
Thanks to Lemmas \ref{lm:l2-u} and \ref{lm:linfty-u},
\begin{align*}
\norm{\eta'(\ued)\px\ued}^2_{L^2((0,T)\times\R)}&\le\norm{\eta'}^2_{L^{\infty}(I_{T,2})}\int_{0}^{T}\norm{\px\ued(s,\cdot)}^2_{L^2(\R)}ds\\
&\le \norm{\eta'}^2_{L^{\infty}(I_{T,2})} e^{2\gamma T}\int_{0}^{T}e^{-2\gamma s}\norm{\px\ued(s,\cdot)}^2_{L^2(\R)}ds\\
&\le \frac{1}{2}\norm{\eta'}^2_{L^{\infty}(I_{T,2})}e^{2\gamma T}\norm{u_{0}}^2_{L^2(\R)}\le C(T),
\end{align*}
where $I_{T,2}$ is defined in \eqref{eq:def-I2}.

We claim that
\begin{equation}
\label{eq:eta14}
\textrm{$\{\eta''(\ued)(\px\ued)^2\}_\delta$ is bounded in $L^2((0,T)\times\R)$}.
\end{equation}
Due to Lemmas \ref{lm:l2-u}, \ref{lm:linfty-u}, \ref{lm:37}
\begin{align*}
\norm{\eta''(\ued)(\px\ued)^2}^2_{L^2((0,T)\times\R)}&\le\norm{\eta''}^2_{L^{\infty}(I_{T,2})}\int_{0}^{T}\!\!\!\!\int_{\R}(\px\ued(s,x))^4dsdx\\
&\le \norm{\eta''}^2_{L^{\infty}(I_{T,2})}\norm{\px\ued}^2_{L^{\infty}(I_{T,1})}\int_{0}^{T}\norm{\px\ued(s,\cdot)}^2_{L^2(\R)}ds\\
&\le \frac{1}{2}\norm{\eta''}^2_{L^{\infty}(I_{T,2})}\norm{\px\ued}^2_{L^{\infty}(I_{T,1})}e^{2\gamma T}\norm{u_{0}}^2_{L^2(\R)}\le C(T),
\end{align*}
where $I_{T,1}$ is defined in \eqref{eq:defI}.

We claim that
\begin{equation}
\label{eq:eta15}
\textrm{$\{\eta'(\ued)\pxx\ued\}_\delta$ is bounded in $L^2((0,T)\times\R)$}.
\end{equation}
Thanks to Lemmas \ref{lm:linfty-u} and \ref{lm:34},
\begin{align*}
\norm{\eta'(\ued)\pxx\ued}^2_{L^2((0,T)\times\R)}&\le\norm{\eta'}^2_{L^{\infty}(I_{T,2})}\int_{0}^{T}\norm{\pxx\ued(s,\cdot)}^2_{L^2(\R)}ds\\
&\le \norm{\eta'}^2_{L^{\infty}(I_{T,2})}C(T)\le C(T).
\end{align*}
\eqref{eq:eta14} and \eqref{eq:eta15} give \eqref{eq:eta13}.\\
Therefore, \eqref{eq:compct-H-1} follows from \eqref{eq:eta12} and \eqref{eq:eta13}.

We have that
\begin{equation*}
\textrm{$\{\CL_{2,\delta}\}_{\delta>0}$ is bounded in $L^1((0,T)\times\R)$}.
\end{equation*}
Due to Lemmas \ref{lm:l2-u}, \ref{lm:linfty-u},
\begin{align*}
\norm{\eta''(\ued)(\px\ued)^2}_{L^1((0,T)\times\R)}&\le\norm{\eta''}_{L^{\infty}(I_{T,2})}\int_{0}^{T}\norm{\px\ued(s,\cdot)}^2_{L^2(\R)}ds\\
&\le \norm{\eta'}^2_{L^{\infty}(I_{T,2})} e^{2\gamma T}\int_{0}^{T}e^{-2\gamma s}\norm{\px\ued(s,\cdot)}^2_{L^2(\R)}ds\\
&\le \frac{\norm{\eta'}^2_{L^{\infty}(I_{T,2})}e^{2\gamma T}}{2}\norm{u_{0}}^2_{L^2(\R)}\le C(T).
\end{align*}
We have that
\begin{equation*}
\textrm{$\{\CL_{3,\delta}\}_{\delta>0}$ is bounded in $L^1_{loc}((0,T)\times\R)$.}
\end{equation*}
Let $K$ be a compact subset of $(0,T)\times\R$. By Lemmas \ref{lm:P-infty} and \ref{lm:linfty-u},
\begin{align*}
\norm{\gamma\eta'(\ued)\Ped}_{L^1(K)}&=\gamma\int_{K}\vert\eta'(\ue)\vert\vert\Pe\vert
dtdx\\
&\leq \gamma
\norm{\eta'}_{L^{\infty}(I_{T,2})}\norm{\Pe}_{L^{\infty}(I_{T,1})}\vert K \vert .
\end{align*}
Therefore, Murat's Lemma \cite{Murat:Hneg} implies that
\begin{equation}
\label{eq:GMC1}
    \text{$\left\{  \pt  \eta(\ued)+\px q(\ued)\right\}_{\delta>0}$
    lies in a compact subset of $\Hneg((0,\infty)\times\R)$.}
\end{equation}
The $L^{\infty}$ bound stated in Lemma \ref{lm:linfty-u}, \eqref{eq:GMC1} and the
 Tartar's compensated compactness method \cite{TartarI} give the existence of a subsequence
$\{\uedk\}_{k\in\N}$ and a limit function $u\in L^{\infty}((0,T)\times\R)$
such that
\begin{equation}\label{eq:convu}
    \textrm{$\uedk \to u$ a.e.~and in $L^{p}_{loc}((0,T)\times\R)$, $1\le p<\infty$}.
\end{equation}
Hence,
\begin{equation}
\label{eq:udelta-to-ueps}
 \textrm{$\uedk \to u$  in $L^{\infty}((0,T)\times\R)$}.
\end{equation}
Moreover, for convexity, we have
\begin{equation}
\label{eq:H-2-R}
\begin{split}
\norm{u(t,\cdot)}_{L^2(\R)}^2+2 e^{2\gamma t}\int_0^t e^{-2\gamma s}\norm{\px u(s,\cdot)}^2_{L^2(\R)}ds&\le C(T),\\
\norm{\px u(t,\cdot)}^2_{L^2(\R)} + \int_{0}^{t}\norm{\pxx u(s,\cdot)}^2_{L^2(\R)}ds&\le C(T),\\
\norm{\pxx u(t,\cdot)}^2_{L^2(\R)}+\int_{0}^{t}\norm{\pxxx u(s,\cdot)}^2_{L^2(\R)}ds&\le C(T).
\end{split}
\end{equation}
We need only to observe that
\begin{align*}
&2 e^{2\gamma t}\int_0^t e^{-2\gamma s}\norm{\px u(s,\cdot)}^2_{L^2(\R)}ds\\
&\quad\le 2 e^{2\gamma t}\liminf_{k}\int_0^t e^{-2\gamma s}\norm{\px \uedk(s,\cdot)}^2_{L^2(\R)}ds\le C(T),\\
&\int_{0}^{t}\norm{\pxx u(s,\cdot)}^2_{L^2(\R)}ds\le  \liminf_{k} \int_{0}^{t}\norm{\pxx\uedk(s,\cdot)}^2_{L^2(\R)}ds\le C(T),\\
&\int_{0}^{t}\norm{\pxxx u(s,\cdot)}^2_{L^2(\R)}ds\le \liminf_{k}\int_{0}^{t}\norm{\pxxx\uedk(s,\cdot)}^2_{L^2(\R)}ds\le C(T).
\end{align*}
Moreover, it follows from convexity and Lemma \ref{lm:38} that
\begin{equation}
\label{eq:1420}
\px^{\ell}u(t,\cdot)\in L^{2}(\R),\quad \ell >2, \quad t\in (0,T).
\end{equation}
Therefore, \eqref{eq:udelta-to-ueps}, \eqref{eq:H-2-R} and \eqref{eq:1420} give \eqref{eq:uePe1}. \eqref{eq:uePe2} follows from Lemma \ref{lm:P-infty}.

Finally, we prove that
\begin{equation}
\label{eq:pxp-u}
\int_{-\infty}^{x}u(t,y)dy=P(t,x), \quad \textrm{a.e.} \quad \textrm{in} \quad (t,x)\in I_{T,1}.
\end{equation}
Integrating the second equation of \eqref{eq:OHepsw1} on $(-\infty,x)$, for \eqref{eq:P-pxP-intfy1}, we have that
\begin{equation}
\label{eq:00113}
\int_{-\infty}^{x}\uedk(t,y)dy= \Pedk(t,x) -\dk\px\Pedk(t,x).
\end{equation}
We show that
\begin{equation}
\label{eq:px-1}
\textrm{$\delta\px\Ped(t,x)\to 0$ in $L^{\infty}((0,T)\times\R)$, $T>0$ as $\delta\to0$.}
\end{equation}
It follows from \eqref{eq:h2-P} that
\begin{equation*}
\delta\norm{\px\Ped}_{L^{\infty}((0,T)\times\R)}\leq \sqrt{\delta}e^{\gamma t}\norm{u_{\eps,0}}_{L^2(\R)} =\sqrt{\delta}C(T)\to 0,
\end{equation*}
that is \eqref{eq:px-1}.\\
Therefore, \eqref{eq:pxp-u} follows from \eqref{eq:uePe1}, \eqref{eq:uePe2}, \eqref{eq:00113} and \eqref{eq:px-1}.
The proof is done.
\end{proof}
\begin{lemma}\label{lm:u-null}
Let $u(t,x)$ be a classical solution of \eqref{eq:OHw-u}, or \eqref{eq:OHw}. Then,
\begin{equation}
\label{eq:con-u}
\int_{\R}u(t,x) dx=0, \quad t\ge 0,
\end{equation}
\end{lemma}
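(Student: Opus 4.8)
The plan is to reduce the claim to the vanishing of $P$ at $+\infty$ and then to exploit the square-integrability of $P(t,\cdot)$. Since $\px P=u$ and $P(t,-\infty)=0$, for every $t\ge 0$ we have
\[
\int_{\R} u(t,x)\,dx=\lim_{x\to+\infty}\int_{-\infty}^{x} u(t,y)\,dy=\lim_{x\to+\infty}P(t,x),
\]
so it suffices to prove that $P(t,+\infty):=\lim_{x\to+\infty}P(t,x)$ exists and equals $0$.

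First I would show that this limit exists. Using $\px P=u$, $P(t,-\infty)=0$ and the fundamental theorem of calculus,
\[
P^2(t,x)=2\int_{-\infty}^{x}P(t,y)\,\px P(t,y)\,dy=2\int_{-\infty}^{x}P(t,y)\,u(t,y)\,dy.
\]
Because $u(t,\cdot),P(t,\cdot)\in L^2(\R)$ by \eqref{eq:uePe1} and \eqref{eq:uePe2}, the Cauchy--Schwarz inequality shows that $y\mapsto P(t,y)u(t,y)$ is integrable on $\R$, so the right-hand side converges, as $x\to+\infty$, to the finite number $2\int_{\R}Pu\,dx$. Hence $\lim_{x\to+\infty}P^2(t,x)$ exists; since $P(t,\cdot)$ is continuous (indeed $C^1$, as $\px P=u$ is continuous by \eqref{eq:uePe1}), the limit $L:=P(t,+\infty)$ exists as well.

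It then remains to identify $L$. If $L\neq 0$, then $P^2(t,x)\to L^2>0$ as $x\to+\infty$, which is incompatible with $P(t,\cdot)\in L^2(\R)$; hence $L=0$, and consequently $\int_{\R} u(t,x)\,dx=P(t,+\infty)=0$ for every $t\ge 0$ (at $t=0$ this is also immediate from \eqref{eq:assinit}). The main---and essentially the only delicate---point is the existence of the limit $P(t,+\infty)$: a generic continuous $L^2$ function need not have a limit at infinity, so the identity $P^2(t,x)=2\int_{-\infty}^{x}Pu\,dy$ is the crucial device, converting the square-integrability of both $u$ and $P$ into convergence of $P^2$, after which $L^2$-membership forces the value $0$.
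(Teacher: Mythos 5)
Your argument is correct in substance, but it is a genuinely different proof from the paper's. The paper differentiates the first equation of \eqref{eq:OHw} in $x$ to get $\px(\pt u + \px f(u) - \pxx u) = \gamma\px P = \gamma u$ and then integrates over $\R$, asserting that the boundary terms of the perfect derivative on the left vanish; this leans on decay of $\pt u$, $\px f(u)$, $\pxx u$ as $x\to\pm\infty$ for a smooth solution, and in fact the delicate term is $\pt u$ at $+\infty$, whose vanishing is (via the equation itself) tantamount to $\gamma P(t,+\infty)=0$, essentially the claim being proved. Your route never touches the evolution equation: you use only the constraint structure, $\px P = u$, $P(t,-\infty)=0$, plus square-integrability, i.e.\ that $P(t,\cdot)\in H^1(\R)$, and you run the standard argument that $H^1(\R)$ functions vanish at infinity (the identity $P^2(t,x)=2\int_{-\infty}^{x}Pu\,dy$ with $Pu\in L^1(\R)$ forces $P^2$ to have a limit, which integrability then forces to be $0$), whence $\int_\R u\,dx = P(t,+\infty)=0$. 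This is more self-contained and avoids the unstated decay hypotheses on $\pt u$ that the paper's two-line proof needs.

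One caveat you should repair: the ingredient $P(t,\cdot)\in L^2(\R)$ is not literally what \eqref{eq:uePe2} gives you. That statement is a space--time bound, $P\in L^2((0,T)\times\R)$, which only yields $P(t,\cdot)\in L^2(\R)$ for \emph{almost every} $t$; so, as written, your proof establishes \eqref{eq:con-u} for a.e.\ $t\in(0,T)$ (plus $t=0$, which is \eqref{eq:assinit}), not for every $t\ge 0$, and upgrading by continuity in $t$ is not automatic since $u\in C((0,T);H^\ell(\R))$ does not control $\int_\R u\,dx$. The fix is available in the paper: Lemma \ref{lm:P-infty}, estimate \eqref{eq:l2P}, gives $\norm{\Ped(t,\cdot)}_{L^2(\R)}\le C(T)$ for \emph{every} $t$, uniformly in $\delta$, and this bound is inherited by the limit function $P(t,\cdot)$ (weak lower semicontinuity of the $L^2$ norm along the approximating sequence), so that $P(t,\cdot)\in L^2(\R)$ for each fixed $t\in(0,T)$. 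With that pointwise-in-time bound cited instead of \eqref{eq:uePe2}, your argument gives the full statement.
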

\begin{proof}
Differentiating \eqref{eq:OHw} with respect to $x$, we have
\begin{equation}
\label{eq:diff-oh}
\px(\pt u + \px f(u) -  \pxx u)=\gamma u.
\end{equation}
Since $u$ is a smooth solution of \eqref{eq:OHw}, an integration over $\R$ gives \eqref{eq:con-u}.
\end{proof}
We are ready for the proof of Theorem \ref{th:wellp}.
\begin{proof}[Proof of Theorem \ref{th:wellp}]
Lemma \ref{lm:exist} gives the existence of a classical solution of \eqref{eq:OHw-u}, or \eqref{eq:OHw}, while Lemma \ref{lm:u-null} says that
the solution has zero mean.

Let us show that $u(t,x)$ is unique and \eqref{eq:l2-stability} holds.
Let $u,\,v$ be two classical solutions of \eqref{eq:OHw-u}, or \eqref{eq:OHw}, that is
\begin{align*}
& \begin{cases}
\pt u+ f'(u)\px u=\gamma P^{u}+\pxx u,& t>0,  x\in\R,\\
\px P^{u}=u, & t>0, x\in\R,\\
u(0,x)=u_{0}(x),& x\in\R,
\end{cases}\\
&\begin{cases}
\pt v+f'(v)\px v=\gamma P^{v} +\pxx v, & t>0, x\in\R,\\
\px P^{v}=v, & t>0, x\in\R,\\
v(0,x)=v_{0}(x),& x\in\R.
\end{cases}
\end{align*}
Then, the function
\begin{equation}
\label{eq:def-di-omega}
\omega(t,x)=u(t,x)-v(t,x)
\end{equation}
is solution of the following Cauchy problem
\begin{equation}
\label{eq:epsw}
\begin{cases}
\pt \omega+ f'(u)\px u -f'(v)\px v =\gamma\Omega+ \pxx\omega,&\quad t>0,\ x\in\R,\\
\px\Omega=\omega,&\quad t>0,\ x\in\R,\\
\omega(0,x)=u_{0}(x) - v_{0}(x) ,&\quad x\in\R,
\end{cases}
\end{equation}
where
\begin{equation}
\label{eq:def-di-Omega}
\begin{split}
\Omega(t,x)&=P^{u}(t,x)- P^{v}(t,x)\\
&=\int_{-\infty}^{x} u(t,y)dy - \int_{-\infty}^{x} v(t,y)dy\\
&=\int_{-\infty}^{x} (u(t,y)-v(t,y))dy =\int_{-\infty}^x\omega(t,y)dy.
\end{split}
\end{equation}
It follows from Lemma \ref{lm:u-null} and \eqref{eq:def-di-Omega} that
\begin{equation}
\label{eq:Omega-in-infty}
\Omega(t,\infty)= \int_{\R} u(t,y)dy - \int_{\R} v(t,y)dy=0.
\end{equation}
Observe that, from \eqref{eq:def-di-omega},
\begin{align*}
f'(u)\px u-f'(v)\px v &= f'(u)\px u - f'(u)\px v + f'(u)\px v - f'(v)\px v\\
&=f'(u)\px(u-v) +( f'(u)- f'(v))\px v\\
&=f'(u)\px\omega + ( f'(u)- f'(v))\px v.
\end{align*}
Therefore, the first equation of \eqref{eq:epsw} is equivalent to the following one:
\begin{equation}
\label{eq:epsw1}
\pt \omega+ f'(u)\px\omega + ( f'(u)- f'(v))\px v =\gamma\Omega+ \pxx\omega.
\end{equation}
Moreover, since $u$ and $v$ are in $L^{\infty}((0,T)\times\R)$, we have
that
\begin{equation}
\label{eq:f1}
\Big\vert f'(u(t,x))- f'(v(t,x))\Big\vert \leq C(T) \vert u(t,x) - v(t,x)\vert,\quad (t,x)\in (0,T)\times\R,
\end{equation}
where
\begin{equation}
\label{eq:sup}
C(T)=\sup_{(0,T)\times\R}\Big\{\vert f''(u)\vert + \vert f''(v)\vert\Big\}.
\end{equation}
Therefore, \eqref{eq:def-di-omega} and \eqref{eq:f1} give
\begin{equation}
\label{eq:f1omega}
\Big\vert f'(u(t,x))- f'(v(t,x))\Big\vert \leq C(T) \vert\omega(t,x)\vert,\quad (t,x)\in (0,T)\times\R.
\end{equation}
Multiplying \eqref{eq:epsw1} by $\omega$, an integration on $\R$ gives
\begin{align*}
\frac{d}{dt}\int_{\R} \omega^2dx=&2\int_{\R} \omega\pt\omega dx\\
=&2\int_{\R}\omega\pxx\omega dx-2\int_{\R}\omega f'(u)\px\omega  dx\\
&-2\int_{\R}\omega( f'(u)- f'(v))\px v dx  +2\gamma\int_{\R}\Omega\omega dx\\
=& -2\int_{\R}(\px\omega)^2 dx +\int_{\R}\omega^2 f''(u)\px u dx\\
&-2\int_{\R}\oeps( f'(u)- f'(v))\px v dx  +2\gamma\int_{\R}\Omega\omega  dx.
\end{align*}
It follows from the second equation of \eqref{eq:epsw} and Lemma \ref{lm:u-null} that
\begin{equation}
\label{eq:1230}
\begin{split}
&\frac{d}{dt}\norm{\omega(t,\cdot)}^2_{L^{2}(\R)}+2\norm{\px\omega(t,\cdot)}^2_{L^{2}(\R)}\\
&\quad\le \int_{\R}\omega^2 \vert f''(u)\vert\vert\px u\vert dx + 2\int_{\R}\vert\omega\vert\vert( f'(u)- f'(v))\vert\vert\px v\vert dx.
\end{split}
\end{equation}
Since $u(t,\cdot),\,v(t,\cdot)\in H^{\ell}(\R),\ell >2$, for each $t\in (0,T)$, then
\begin{equation}
\label{eq:1231}
\px u(t,\cdot),\px v(t,\cdot)\in H^{\ell-1}(\R)\subset L^{\infty}(\R), \quad t\in (0,T).
\end{equation}
Therefore, thanks to \eqref{eq:f1}, \eqref{eq:sup}, \eqref{eq:1230} and \eqref{eq:1231},
\begin{align*}
\frac{d}{dt}\norm{\omega(t,\cdot)}^2_{L^{2}(\R)}+2\norm{\px\omega(t,\cdot)}^2_{L^{2}(\R)}\le C(T)\norm{\omega(t,\cdot)}^2_{L^{2}(\R)}.
\end{align*}
The Gronwall Lemma gives
\begin{equation}
\label{eq:gro1}
\norm{\omega(t,\cdot)}^2_{L^{2}(\R)}+2 e^{C(T)t}\int_{0}^{s}e^{-C(T)s} \norm{\px\omega(s,\cdot)}^2_{L^{2}(\R)} ds \le e^{C(T)t}\norm{\omega_{0}}^2_{L^2(\R)}.
\end{equation}
Hence, \eqref{eq:l2-stability} follows from \eqref{eq:def-di-omega}, \eqref{eq:epsw} and \eqref{eq:gro1}.
\end{proof}

\end{document}